\def\BState{\State\hskip-\ALG@thistlm}
\definecolor{darkblue}{rgb}{0,0,0.7}
\definecolor{darkred}{rgb}{0.7,0,0}
\newcommand{\defi}[1]{\textit{\color{blue}#1}}    % Definitions in the text
\newtheorem{theorem}{Theorem}
\newtheorem{lemma}[theorem]{Lemma}
\newtheorem{axiom}[theorem]{Statement}
\newtheorem{corollary}[theorem]{Corollary}
\newtheorem{definition}[theorem]{Definition}
\newtheorem{proposition}[theorem]{Proposition}
\newtheorem{conjecture}[theorem]{Conjecture}
\newtheorem{remark}[theorem]{Remark}
\newtheorem{question}[theorem]{Question}
\newtheorem*{theorem*}{Theorem}
\newtheorem*{cor*}{Corollary}
\newtheorem*{prop*}{Proposition}
\newtheorem*{conjecture*}{Conjecture}
\newcommand{\II}{\mathfrak{I}}
\newcommand{\SIC}{\mathfrak{I}_S}
\newcommand{\SVIC}{\mathfrak{I}_{SV}}
\newcommand{\VIC}{\mathfrak{I}_{V}}
\begin{document}

\title[Minimal graphs for contractible and dismantlable properties]{Minimal graphs for contractible and dismantlable properties}

\author[A. Dochtermann]{Anton Dochtermann}
\email{dochtermann@txstate.edu}
 \address{Department of Mathematics, Texas State University, USA}
 
\author[J. F. Espinoza]{Jes\'us F. Espinoza}
\email{jesusfrancisco.espinoza@unison.mx}
\address{Departamento de Matem\'aticas, Universidad de Sonora, M\'exico}

\author[M. E. Fr\'ias-Armenta]{Mart\'in Eduardo Fr\'ias-Armenta}
\email{eduardo.frias@unison.mx}
\address{Departamento de Matem\'aticas, Universidad de Sonora, M\'exico}

\author[H. A. Hern\'andez-Hern\'andez]{H\'ector Alfredo Hern\'andez-Hern\'andez}
\email{hector.hernandez@unison.mx}
\address{Departamento de Matem\'aticas, Universidad de Sonora, M\'exico}

\begin{abstract}
The notion of a contractible transformation on a graph was introduced by Ivashchenko as a means to study molecular spaces arising from digital topology and computer image analysis, and more recently has been applied to topological data analysis. Contractible transformations involve a list of four elementary moves that can be performed on the vertices and edges of a graph, and it has been shown by Chen, Yau, and Yeh that these moves preserve the simple homotopy type of the underlying clique complex.  A graph is said to be $\II$-contractible if one can reduce it to a single isolated vertex via a sequence of contractible transformations.  Inspired by the notions of collapsible and non-evasive simplicial complexes, in this paper we study certain subclasses of $\II$-contractible graphs where one can collapse to a vertex using only a subset of these moves.  Our main results involve constructions of minimal examples of graphs for which the resulting classes differ.  We also relate these classes of graphs to the notion of $k$-dismantlable graphs and $k$-collapsible complexes, which also leads to a minimal counterexample to an erroneous claim of Ivashchenko from the literature.  We end with some open questions.
\end{abstract}

\maketitle

%\begin{keyword}
%Contractible transformations, contractible graph.
%\MSC[2010] 05C75 \sep 05C76
%\end{keyword}

%%%%%%%%%%%%%%%%%%%%%%%%%%%%%%%%%%%%%%%%%%%%%%%%%%%%%%%%%%%%%%%%
%%%%%%%%%%%%%%%%%%%%     FIRST  SECTION     %%%%%%%%%%%%%%%%%%%%
%%%%%%%%%%%%%%%%%%%%%%%%%%%%%%%%%%%%%%%%%%%%%%%%%%%%%%%%%%%%%%%%

\section{Introduction}\label{sec:intro}

In many applications of graph theory it is of interest to find combinatorial operations on a graph that preserve the topology of its clique complex. In \cite{Ivashchenko1994} Ivashchenko introduced what we will call {\it $\II$-contractible transformations}, a collection of four modifications one can make on the vertices and edges of a graph (see Definition \ref{IF} for a precise statement).  These local operations are used in computer image analysis, the theory of molecular spaces, and digital topology.  For instance to model a digital image $S$ embedded in $n$-dimensional Euclidean space one can divide ${\mathbb R}^n$ into a set of cubes of a certain scale and consider the \emph{molecular space} $M_1$ obtained by the set of cubes intersecting $S$.  Changing the scale of these cubes gives rise to another molecular space $M_2$.  Under certain conditions the intersection graphs of these two molecular spaces can be transformed into each other via a sequence of $\II$-contractible transformations.

Allowing for sequences of $\II$-contractible transformations defines an equivalence relation on the set of finite graphs. In particular the set of {\it $\II$-contractible} graphs, denoted $\II$, consists of those graphs that can be reduced to a single isolated vertex through such a sequence.  These notions mimic constructions in combinatorial topology and simple homotopy theory, where similar operations on simplicial (or more generally CW-) complexes are used to define various families within a homotopy class. In the case of graphs and their clique complexes, these moves can be described in terms of simple graph theoretic constructions that are more well-suited for computer implementation.  As such they have applications to topological data analysis and for instance the computation of persistent homology.

In \cite{Ivashchenko1994} Ivashchenko proved that $\II$-contractible transformations on a graph $G$ do not change the homology groups of its clique complex $\Delta(G)$.  More recently in \cite{Chen2001} Chen, Yau, and Yeh proved that $\II$-contractible transformations in fact preserve the \emph{simple homotopy type} of the underlying clique complex.  In particular if $G$ is an $\II$-contractible graph then $\Delta(G)$ is contractible.

 There are some important subfamilies of the class $\II$ obtained by allowing only certain subsets of the $\II$-transformations described above.  Again these mimic constructions for simplicial complexes where the classes of \emph{collapsible} and \emph{nonevasive} complexes are obtained by allowing only certain kinds of collapsing operations.  For graphs, we define the class $\SIC$ of  \defi{strong $\II$-contractible} graphs to be those obtained by only applying the \emph{gluing} type operations (I2 and I4 of Definition \ref{IF}).  In \cite{Espinoza2018} it is proved that if a graph $G$ is strong $\II$-contractible then the clique complex of $G$ is collapsible.   We define the class $\VIC$ of \defi{vertex $\II$-contractible} graphs by restricting to the transformations that only involve \emph{vertices} (I1 and I2 of Definition \ref{IF}).  Finally, the class $\SVIC$ of \defi{strong vertex $\II$-contractible} are those obtained by when we only allow vertex gluing (I2).

A natural question to ask is whether these actually constitute different classes of graphs; that is if there is any redundancy in the transformations described in Definition \ref{IF}.  In \cite{Chen2001} Chen, Yau, and Yeh show that in fact the class of $\II$-contractible and vertex $\II$-contractible graphs coincide. 
In particular they prove that both an edge deletion and an edge gluing can be realized by the composition of a vertex gluing followed by a vertex deletion. 
The authors also describe a flag triangulation of Bing's house which provides an example of a graph that is $\II$-contractible but not strong $\II$-contractible. In summary we have the following containments of graph classes.
\begin{equation}\label{eq:contain}
{\SVIC} \subsetneq {\SIC} \subsetneq {\II} = {\VIC}.
\end{equation}

In addition to \cite{Chen2001} a number of papers have studied these and related operations on graphs, but confusion has arisen surrounding exactly which transformations are needed to define the resulting classes.  Indeed, in a follow up paper \cite{Ivashchenko1994a} authored by Ivashchenko he claims that any $\II$-contractible graph can be obtained from an isolated vertex by a series of $\II$-contractible gluings of vertices.  It turns out that this is not the case, and a counterexample was described by the third author in \cite{FriasArmenta2020}. 

We will see that the class $\SVIC$ of strong vertex $\II$-contractible graphs also have close connections to the class of \emph{$k$-dismantlable} graphs as introduced by Fieux and Jouve in \cite{Fieux2020}.  This class of graphs generalize the well-studied notion of \emph{dismantlable} (here corresponding to $0$-dismantlable) graphs, which have seen applications in the study of homomorphism complexes \cite{Doc2009}, statistical physics and Gibbs measures \cite{Brightwell2000}, pursuit-evasion games on graphs \cite{Now1983}, iterated clique graphs \cite{Fri2004}, and chordal graphs \cite{Ada2017}.   The class of $k$-dismantlable graphs can also be seen as graph-theoretical analogues of the \emph{$k$-collapsible} simplicial complexes of Barmak and Minian \cite{Barmak2011}, who used these concepts to define a notion of \emph{strong homotopy type}. In \cite{Barmak2011} it is shown that a simplicial complex is non-evasive if and only if it is $k$-collapsible for some $k$.

\subsection{Our contributions}
In this paper we seek to further understand the distinction between the classes of graphs $\SVIC$, $\SIC$, and $\II$.  We construct minimal examples of graphs that demonstrate how these families differ, and how these classes relate to $k$-dismantlability and other constructions from the literature. Our examples were constructed by first creating a bank of all isomorphism types of connected graphs up to 11 vertices, and then employing the software Ripser \cite{Bauer2021} to compute homology of the corresponding clique complexes. Among those with vanishing homology, we tested for the various contractible properties by employing code available at \cite{gcs}. Our first result concerns contractible graphs that are not in $\SVIC$, addressing the first containment in \ref{eq:contain}.

%Our first result provides a minimal example of a graph that is strong $\II$-contractible but not strong vertex ${\II}$-contractible, demonstrating the inequality in the first containment of \ref{eq:contain}.  In particular edge deletions will be needed in the collapsing of these graphs.

\newtheorem*{homologia-trivial-nsvic}{Theorem \ref{homologia-trivial-nsvic}}
\begin{homologia-trivial-nsvic}
The smallest graphs (in terms of vertices) that have trivial homology but which are not strong vertex $\II$-contractible are depicted in Figure \ref{Grupo12}. Furthermore these graphs are all strong $\II$-contractible. Hence for graphs on at most ten vertices, the class of acyclic, $\II$-contractible, strong $\II$-contractible, and strong vertex $\II$-contractible graphs all coincide.
\end{homologia-trivial-nsvic}

We next consider graphs that are $\II$-contractible but not strong $\II$-contractible, addressing the second inequality in \ref{eq:contain}. As mentioned above, Chen, Yau, and Yeh \cite{Chen2001} describe a flag triangulation of Bing's house that provides such a graph on 21 vertices.  We construct an example of such a graph on 15 vertices via a flag triangulation of the \emph{Dunce Hat} (see Figure \ref{Dunce}).

\newtheorem*{prop:dunce}{Proposition \ref{prop:dunce}}
\begin{prop:dunce}
The graph depicted in Figure \ref{DunceA} is $\II$-contractible but is not strong-$\II$-contractible.
\end{prop:dunce}

We have not been able to verify whether this graph is minimal with respect to this property, although we do suspect that this is the case (see in Conjecture \ref{conj:dunce}).

%It is not hard to see that the graphs in Figure \ref{figureI8a} are in fact strong vertex $\II$-contractible (for instance one can contract vertex $7$ in both cases). 

%In \cite{Ivashchenko1994a} Ivashchenko uses Statement \ref{Iva_axiom} to establish many other properties of $\II$-contractible graphs which are also seen to be erroneous, for instance that every $\II$-contractible graph is strong vertex $\II$-contractible \cite[Theorem 3.8]{Ivashchenko1994a}.  Our Theorem \ref{homologia-trivial-nsvic} provides a counterexample to this.  As we mentioned above, in \cite{Chen2001} it is shown that in fact there exist $\II$-contractible graphs which are not even strong $\II$-contractible.

By definition the class of strong vertex $\II$-contractible graphs involves removing vertices whose neighborhoods satisfy certain properties, and a natural question to ask is whether one can apply a greedy algorithm to perform these removals. In other words, if $G$ is strong vertex $\II$-contractible and $v \in G$ is an $\SVIC$-contractible vertex is it true that $G - v$ is strong vertex $\II$-contractible?  It turns out that this is not always the case, that in fact one can get ``stuck'' in the process of removing such vertices.  Our next result provides the smallest examples of graphs where this happens.

\newtheorem*{minima-de-12}{Theorem \ref{minima-de-12}}
\begin{minima-de-12}
The graphs depicted in Figure \ref{stuck} are the smallest graphs $G$ (in terms of vertices) that have the property that
\begin{enumerate}
    \item $G$ is strong vertex $\II$-contractible;
    \item There exists a vertex $v \in G$ such that $N_G(v)$ is strong vertex $\II$-contractible and yet $G-v$ is not strong vertex $\II$-contractible.
\end {enumerate}
\end{minima-de-12}

We next relate our constructions to the class of $k$-dismantlable graphs, as introduced by Fieux and Jouve in \cite{Fieux2020}. This notion again involves an inductive definition in terms of the removal of vertices satisfying certain properties, and one justification for the  study of $0$-dismantlable graphs is that order does \emph{not} matter in the removal of these vertices.  We provide minimal examples of graphs that are strong vertex $\II$-contractible but not $0$-dismantlable.

\newtheorem*{thm:notdis}{Theorem \ref{thm:notdis}}
\begin{thm:notdis}The smallest graphs (in terms of vertices) that are strong vertex $\II$-contractible but not 0-dismantlable are depicted in Figure \ref{fig:notdis}.  Furthermore these graphs are all $1$-dismantlable.
\end{thm:notdis}

We remark that two of these graphs were also described in \cite{Bou2010} and \cite{Fieux2020} as  examples of graphs that are 1-dismantlable but not 0-dismantlable, we thank the referee for pointing this out. The notion of $k$-dismantlability also has an analogue for simplicial complexes in the context of \emph{$k$-collapsibility} first introduced by Barmak and Minian in \cite{Barmak2011}, who prove that a simplicial complex is non-evasive if and only if it is $k$-collapsible for some $k$.  We prove an analogous fact for graphs.

\newtheorem*{sicne}{Theorem \ref{sicne}}
\begin{sicne}A graph $G$ is strong vertex $\II$-contractible if and only if $G$ is $k$-dismantlable for some $k$.
\end{sicne}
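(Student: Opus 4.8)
The plan is to prove both implications by unwinding the two inductive definitions and showing they can be matched up move-by-move, with the "grading" $k$ of $k$-dismantlability playing the role of a bound on the dimension (or clique number) of the neighborhoods that get collapsed. Let me recall the two notions as I would use them. A graph $G$ is strong vertex $\II$-contractible ($\SVIC$) if $G$ can be reduced to a single vertex by repeatedly deleting a vertex $v$ whose open neighborhood $N_G(v)$ is itself (inductively) $\SVIC$ — this is exactly the "vertex gluing" move I2 read in reverse. On the other hand, following Fieux--Jouve, $G$ is $k$-dismantlable if it can be reduced to a point by repeatedly removing a vertex $v$ such that $N_G(v)$ is $(k-1)$-dismantlable, with the base case that the one-vertex graph is $(-1)$-dismantlable (equivalently, $0$-dismantlable graphs are those reducible by removing dominated vertices, i.e. vertices $v$ with $N_G(v)$ a cone). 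The key observation is that both definitions are "remove a vertex whose neighborhood lies in a previously-constructed class," and they differ only in whether one tracks a numerical parameter.

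First I would prove the easy direction: if $G$ is $k$-dismantlable for some $k$, then $G$ is $\SVIC$. I would induct on $k$ and, within that, on the number of vertices. The base case $k = -1$ (or $k=0$ handled directly) is immediate since a dominated vertex has a cone — hence $\SVIC$ — neighborhood, so $0$-dismantlable graphs are $\SVIC$ by a straightforward induction on $|V(G)|$. For the inductive step, suppose $G$ is $k$-dismantlable; then there is a vertex $v$ with $N_G(v)$ being $(k-1)$-dismantlable and $G - v$ being $k$-dismantlable. By the outer induction hypothesis $N_G(v)$ is $\SVIC$, and by induction on vertex count $G-v$ is $\SVIC$; since $v$ has $\SVIC$ neighborhood we may delete it first, reducing $G$ to $G-v$, which then reduces to a point. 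Hence $G$ is $\SVIC$.

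The reverse direction is where the real content lies, and I expect it to be the main obstacle. We must show that any $\SVIC$ graph is $k$-dismantlable for \emph{some} finite $k$ — i.e. that the recursion defining $\SVIC$, which has no a priori bound on how deeply nested the neighborhood-reductions go, can always be assigned a finite rank. The natural strategy is to define, for an $\SVIC$ graph $G$, a rank $r(G)$ by strong recursion: $r(G) = 1 + \max r(N_{G_i}(v_i))$ over the neighborhoods encountered along some fixed $\SVIC$-reduction sequence of $G$ (with the single vertex having rank $0$), and then show by induction that $G$ is $r(G)$-dismantlable. The subtlety is that the $\SVIC$ definition is "there exists a reduction sequence," so $r$ as defined depends on a choice; I would fix one witnessing sequence per graph and argue that along it, $G - v_1$ (which appears as a tail of the sequence) inherits a reduction of rank $\le r(G)$ and $N_G(v_1)$ has rank $\le r(G) - 1$, so that the $k$-dismantlability recursion goes through with $k = r(G)$. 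A cleaner alternative, which I would also consider, is to observe that $\SVIC$ graphs are all finite and to set $k$ equal to (a bound depending on) the number of vertices of $G$: since every neighborhood appearing in the recursion is a proper subgraph, the depth of nested neighborhoods is at most $|V(G)| - 1$, and one shows by induction on $|V(G)|$ that an $\SVIC$ graph on $n$ vertices is $(n-1)$-dismantlable. This latter approach sidesteps the dependence-on-sequence issue entirely and is likely the one to write up. Either way, the heart of the argument is a careful simultaneous induction that keeps the numerical parameter in sync with the recursive structure, and the main thing to get right is the bookkeeping showing that deleting the first vertex of an $\SVIC$ sequence leaves a graph that is still reducible with no larger rank.
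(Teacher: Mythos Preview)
Your proposal is correct and follows essentially the same route as the paper. Both directions match: the forward direction is an induction on $k$ exactly as you outline, and for the reverse direction the paper uses precisely your ``cleaner alternative'' of inducting on $|V(G)|$, finding a vertex $v$ with $N_G(v)\in\SVIC$ and $G-v\in\SVIC$, applying the inductive hypothesis to each, and taking $k$ large enough to cover both. The one ingredient the paper makes explicit that you leave implicit is the monotonicity fact (from \cite{Fieux2020}) that $m$-dismantlable implies $m'$-dismantlable for all $m'\ge m$; you will need this when you combine the ranks for $N_G(v)$ and $G-v$ into a single $k$, so be sure to state and invoke it.
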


From this we see that a graph $G$ is in $\SVIC$ if an only if its clique complex $\Delta(G)$ is nonevasive. In a similar vein, in \cite{Fieux2020} Fieux and Jouve define a finite graph to be \emph{non-evasive} if it is $k$-dismantable for some $k$. Hence we conclude that non-evasive graphs coincide with the class $\SVIC$.

It turns out that the examples from Theorem \ref{thm:notdis} also relate to an erroneous ``axiom'' that Ivashchenko formulates in \cite{Ivashchenko1994a}.  Here it is claimed that if $G$ is an $\II$-contractible graph $G$ and if $v \in G$ is any non-cone vertex then one can find a vertex $u\in G$ such that $N_G(u,v)$ is $\II$-contractible (see Statement \ref{Iva_axiom} for a precise statement). In \cite{FriasArmenta2020} the third author describes a graph on 13 vertices that in fact contradicts the statement, and a smaller graph on 11 vertices was found by Ghosh and Ghosh in \cite{Ghosh2021}.  Our constructions provide minimal counterexamples.

\newtheorem*{thm:axiom}{Theorem \ref{thm:axiom}}
\begin{thm:axiom}
The two labeled graphs on 8 vertices depicted in Figure \ref{fig:notdis} are the smallest graphs (in terms of vertices) that contradict Axiom 3.4 from \cite{Ivashchenko1994a}. There are 133 graphs on $9$ vertices that contradict the Axiom.
\end{thm:axiom}

The rest of the paper is organized as follows.  In Section \ref{sec:definitions} we review some concepts from combinatorial topology and  define the various classes of graphs that we study.  In Section \ref{sec:notstrong} we provide examples of graphs that illustrate the hierarchy of contractible graphs depicted in Equation \ref{eq:contain}, and establish Theorem \ref{homologia-trivial-nsvic} and Proposition \ref{prop:dunce}. In Section \ref{sec:order} we discuss the relevance of vertex orderings for strong vertex $\II$-contractible graphs and prove Theorem \ref{minima-de-12}.   In Section \ref{sec:dismantlable} we discuss $k$-dismantlability of graphs, and establish Theorems \ref{thm:notdis} and \ref{sicne}. Here we also discuss minimal counterexamples to the Ivaschenko axiom and prove Theorem \ref{thm:axiom}. In Section \ref{sec:further} we discuss some open problems and future directions. 
%Here we describe a graph that we conjecture is the smallest graph that is $\II$-contractible but not strong $\II$-contractible.

%% \commAD{check this, to finish at the end.}

%\begin{remark}[Colors in the figures]
%In the figures depicting our various counterexamples we use colors to assist in the visualization, and to signify various properties of the edges.  We explain the conventions here. In all cases we start with a triangulation of a disk in the plane using black edges.  We then add edges with particular colors according the following rules (in this order):

%\begin{enumerate}
%\item
%If the graph contains a complete graph $K_4$ (corresponding to the 1-skeleton of a tetrahedron in the clique complex) we color the edges green.

%\item
%If there is an edge that can be deleted to preserve $\II$-contractibility (that is, has property $(I3)$ from Definition \ref{IF}) then we color the edge red.

%\item
%The remaining crossing edges we color orange.
%\end{enumerate}

%In addition, in the figures that depict the addition of vertices and edges, we use blue to depict the new edges.  Although we have added these colors to aid the reader, we also note that the colors in the figures are not essential to understand the figures, and the reader can perfectly manage without them.

%\end{remark}

%%%%%%%%%%%%%%%%%%%%%%%%%%%%%%%%%%%%%%%%%%%%%%%%%%%%%%%%%%%%%%%%
%%%%%%%%%%%%%%%%%%%%     SECOND SECTION     %%%%%%%%%%%%%%%%%%%%
%%%%%%%%%%%%%%%%%%%%%%%%%%%%%%%%%%%%%%%%%%%%%%%%%%%%%%%%%%%%%%%%

\section{Definitions and preliminaries}\label{sec:definitions}

%In \cite{Ivashchenko1994}, the next family of graphs was defined, and its elements are called contractible graphs.

Here we collect some basic definitions and set some notation. Throughout the paper we use $G = (V,E)$ to denote a finite simple graph, with no loops and no multiple edges. We let $K(n)$ denote the \defi{complete graph} on $n$ vertices, so that in particular $K(1)$ denotes a single isolated vertex. If $G$ is a graph and $v \in V(G)$ is a vertex we let $N_G(v)$ (resp. $N_G[v]$) denote the \defi{open neighborhood} (resp. \defi{closed neighborhood}) of $v$ in $G$, defined by
\begin{align*}
N_G(v) &= \{ u \in V(G): \{u,v\} \in E(G)\}, \\
N_G[v] &= N_G(v) \cup \{v\}.
\end{align*}

If $v$ and $w$ are vertices of $G$, their \defi{common neighborhood} $N_G(v,w)$ is given by the intersection
\[N_G(v,w) = N_G(v) \cap N_G(w).\]
Given a graph $G$, we will often abuse notation and use $G$ to also denote its set of vertices. Similarly, given a subset of vertices $B\subset V(G)$, we will use $B$ to also denote the subgraph of $G$ induced on the elements of $B$. In particular, we use $N_G(v)$ to denote the set of neighbors of $v$, as well as the subgraph of $G$ induced by these vertices.  For a vertex $v \in G$ we let $G-v$ denote the subgraph of $G$ obtained by deleting $v$.   From \cite{Ivashchenko1994} we have the follow notion.

\begin{definition}\label{IF} The class of \defi{$\II$-contractible graphs}, denoted $\II$, is defined as follows.
\begin{enumerate}
    \item
    The graph $K(1)$, consisting of a single isolated vertex, is in $\II$.
\item A graph is in $\II$ if it can be obtained from an $\II$-contractible graph $G$ by a sequence of the following operations:

%Any graph of $\II$ can be obtained  from $K(1)$  by the following transformations.
\begin{enumerate}
\item[(I1)] Deleting a vertex: A vertex $v$ of $G$ can be deleted if $N_G(v) \in \II$.
\item[(I2)] Gluing  a vertex: If $G^\prime$ is an $\II$-contractible subgraph of $G$, then a vertex $v \notin G$ can be glued to $G$ to produce a new graph $G^{\prime \prime}$ with $N_{G^{\prime \prime}}(v) = G^\prime$. 
\item[(I3)] Deleting  an edge: An edge $\{v_1,v_2\}$ of $G$ can be deleted if the common neighborhood satisfies $N_G(v_1,v_2) \in \II$.
\item[(I4)] \label{gluedge} Gluing an edge: For two non-adjacent vertices $v_1$ and $v_2$ of $G$, the edge $\{v_1,v_2\}$ can be glued to $G$ if the common neighborhood satisfies $N_G(v_1,v_2) \in \II$.
\end{enumerate} 
\end{enumerate}

We say that a vertex $v \in G$ (resp. an edge $\{v_1,v_2\}$) is \defi{$\II$-contractible} if $N_G(v)$ (resp. $N_G(v_1,v_2)$) is in $\II$.
\end{definition}

It is clear that all complete graphs are $\II$-contractible and more generally it is not hard to show that connected chordal graphs are also in the class $\II$. Two graphs $G$ and $H$ are said to be \defi{$\II$-homotopy equivalent} if one can be obtained from other via a sequence of the transformations (I1)--(I4).  One can see that $\II$-homotopy equivalence defines an equivalence relation on the set of finite graphs. A graph $G$ is said to be \defi{$\II$-contractible} if it is $\II$-homotopy equivalent to a single vertex.

%We next recall how one can obtain a higher-dimensional complex from a graph.  First recall that a \defi{simplicial complex} on a finite set $V$ is a collection $\Delta$ of subsets of $V$ that is closed under taking subsets; that is if $\sigma \in \Delta$ and $\tau \subset \sigma$ then $\tau \in \Delta$.  Elements of $\Delta$ are called \defi{faces} or sometimes \defi{simplices}.  A \defi{simplicial map} $f:\Delta_1 \rightarrow \Delta_2$ is a function on the underlying ground sets that takes faces to faces, so that if $\sigma \in \Delta_1$ then $f(\sigma) \in \Delta_2$. A simplcial complex is said to be \emph{flag} if its minimal nonfaces have dimension 2.  There is a standard functorial way to associate a topological space $|\Delta|$ to a simplicial complex $\Delta$ known as the \defi{geometric realization}. 

Given a graph $G$ its \defi{clique complex} $\Delta(G)$ is by definition the simplicial complex on the vertex set $V(G)$ whose simplices are the complete subgraphs of $G$.  Note that $\Delta(G)$ is a \emph{flag} simplicial complex; that is, its minimal nonfaces have dimension 2. To save on notation we let $H_n(G;A)$ denote the $n$th homology group of the geometric realization clique complex of $G$ with coefficients in the abelian group $A$.  We let $H_n(G) := H_n(G;{\mathbb Z})$ denote the homology groups of $G$ with integer coefficients.  

\noindent
{\bf Convention.} We will often speak about topological properties (e.g. homology groups) of a graph $G$, by which we mean those of its clique complex $\Delta(G)$.

One can check that the transformations on a graph $G$ described in Definition \ref{IF} are \emph{functorial}, in the sense that they induce a continuous map on the realizations of the underlying clique complexes.  In \cite{Chen2001} Chen, Yau, and Yeh prove that the contractible transformations in Definition \ref{IF} preserve the simple homotopy type of the underlying clique complexeses.  In particular we get the following (a strengthening of the main result from \cite{Ivashchenko1994}).

\begin{corollary}\cite[Corollary 3.6]{Chen2001} \label{thm:contractible}
If $G$ is an $\II$-contractible graph then $\Delta(G)$ is contractible.
\end{corollary}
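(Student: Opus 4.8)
The plan is to deduce the statement directly from the theorem of Chen, Yau, and Yeh quoted immediately above, which asserts that each of the four elementary moves (I1)--(I4) preserves the simple homotopy type of the underlying clique complex; essentially all of the mathematical content is already packaged in that result, and what remains is a short chaining argument.

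First I would unwind the hypothesis. If $G$ is $\II$-contractible, then by Definition \ref{IF} there is a finite sequence of graphs $G = G_0, G_1, \dots, G_m = K(1)$ in which each $G_{i+1}$ is obtained from $G_i$ by one of the transformations (I1)--(I4), or the inverse of such a transformation (since $\II$-homotopy equivalence is the equivalence relation generated by these moves). Applying the Chen--Yau--Yeh theorem to each step, the clique complexes $\Delta(G_i)$ and $\Delta(G_{i+1})$ have the same simple homotopy type for every $i$; the inverse moves cause no trouble, since "having the same simple homotopy type" is a symmetric, transitive relation (built from formal expansions and collapses). The functoriality of the transformations noted above also ensures each move is realized by an honest continuous map of realizations, so there is no ambiguity in tracking the homotopy type along the sequence.

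Next I would simply compose: transitivity gives that $\Delta(G)$ has the same simple homotopy type as $\Delta(K(1))$, which is a single point. In particular $\Delta(G)$ is simple homotopy equivalent to a point, hence homotopy equivalent to a point (a simple homotopy equivalence is in particular a homotopy equivalence), and therefore $\Delta(G)$ is contractible.

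I do not expect a genuine obstacle here, since the only delicate point -- that a vertex or edge gluing (or deletion) along an $\II$-contractible neighborhood can be realized on clique complexes by a sequence of elementary collapses and expansions -- is exactly the statement of the cited result and may be invoked. The corollary is then the observation that contractibility, being implied by the simple homotopy type of a point, is inherited through the transitive closure of the moves (I1)--(I4).
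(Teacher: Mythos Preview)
Your proposal is correct and matches the paper's approach: the paper does not give a standalone proof but simply records the corollary as an immediate consequence of the Chen--Yau--Yeh result that the moves (I1)--(I4) preserve simple homotopy type. Your chaining argument is exactly the ``in particular'' that the paper leaves implicit.
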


As far as we know the converse of Corollary \ref{thm:contractible} is still open; that is, if $\Delta(G)$ is contractible is not known if $G$ is necessarily $\II$-contractible.  See Section \ref{sec:further} for more discussion.

%It might be surprising if the class of $\II$-contractible graphs happens to coincide with the class of contractible flag simplicial complexes.  Indeed, as the barycentric subdivision of any simplicial complex is flag, classifying contractible flag complexes should be as hard as classifying contractible simplicial complexes. 

\subsection{Special subclasses of \texorpdfstring{$\II$}{I}-contractible graphs}

The definition of $\II$-contractible graphs in Definition \ref{IF} involves a list of four transformations, and a natural question to ask is where all are necessary to define the class. In \cite{Chen2001} Chen, Yau, and Yeh prove that in fact there is a redundancy. Although the statement of \cite[Lemma 3.4]{Chen2001} is misleading, the proof leads to the following fact. 

\begin{lemma}\cite[Lemma 3.4]{Chen2001}\label{chenlemma}
Edge deletion $(I3)$ and edge gluing $(I4)$ can each be realized by the composition of a vertex gluing $(I2)$ and vertex deletion $(I1)$.
\end{lemma}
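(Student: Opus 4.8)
The plan is to handle edge gluing (I4) first and then edge deletion (I3) by a symmetric argument. Suppose $v_1, v_2$ are non-adjacent vertices of $G$ with $N_G(v_1,v_2) \in \II$, and we wish to add the edge $\{v_1,v_2\}$. The key idea is to introduce an auxiliary vertex $w$ that "witnesses" the edge: I would first glue a new vertex $w \notin G$ to $G$ with prescribed neighborhood $N(w) = N_G[v_1] = N_G(v_1)\cup\{v_1\}$. For this to be a legal (I2) move I must check that $N_G[v_1]$ is $\II$-contractible — and indeed it is, since $v_1$ is a cone point of the induced subgraph on $N_G[v_1]$ (every vertex of $N_G(v_1)$ is adjacent to $v_1$), and a graph with a cone point is $\II$-contractible (it reduces to $K(1)$ by deleting all the other vertices, each having a cone point in its neighborhood, and finally deleting $v_1$). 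Call the resulting graph $G''$.

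Now in $G''$ I want to delete the vertex $v_1$, which is a legal (I1) move precisely when $N_{G''}(v_1) \in \II$. We have $N_{G''}(v_1) = N_G(v_1) \cup \{w\}$, and $w$ is adjacent in $G''$ to all of $N_G(v_1)$ — so again $w$ is a cone point of the induced subgraph on $N_{G''}(v_1)$, hence $N_{G''}(v_1)\in \II$ and the deletion is allowed. Let $G'''$ denote the result. The upshot is that $G'''$ has the same vertex set as $G$ except $v_1$ has been renamed $w$, and crucially $w$ is now adjacent to $v_2$: indeed $v_2 \in N_G(v_1) \subseteq N(w)$ fails since $v_1 \not\sim v_2$ — wait, this needs care. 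Let me instead glue $w$ with $N(w) = N_G(v_1)\cup\{v_1,v_2\}$; the induced subgraph here has $v_1$ as a cone point only if $v_1 \sim v_2$, which it is not. So the correct recipe is: glue $w$ with $N_{G''}(w) = N_G[v_1]$, then observe $w \sim v_1$ but we need $w \sim v_2$ as well, which forces a different construction. The clean fix, following the proof of \cite[Lemma 3.4]{Chen2001}, is to glue $w$ with neighborhood $N_G(v_1,v_2)\cup\{v_1,v_2\}$ — this induced subgraph is $\II$-contractible because $N_G(v_1,v_2)\in\II$ and coning (twice, by $v_1$ then... no, $v_1\not\sim v_2$) — so in fact one should glue $w$ with $N(w) = N_G(v_1)\cup\{v_2\}$ after first verifying this is $\II$-contractible via $v_1$ being a cone point provided $v_2 \sim$ every neighbor of... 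I expect the main obstacle to be exactly this: getting the neighborhood of the auxiliary vertex right so that (a) the gluing (I2) is legal, (b) after the move the two endpoints become adjacent, and (c) the subsequent vertex deletion (I1) is legal. I would resolve it by taking $N(w) = N_G(v_1)\cup\{v_2\}$, checking $\II$-contractibility of this induced subgraph by noting $N_G(v_1,v_2)$ is a cone-off point structure, then deleting $v_1$ (whose neighborhood in the new graph is $N_G(v_1)\cup\{w\}$, coned by $w$).

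For edge deletion (I3), I would run the mirror image: given $\{v_1,v_2\}\in E(G)$ with $N_G(v_1,v_2)\in\II$, glue an auxiliary vertex $w$ with neighborhood $N_G(v_1)\setminus\{v_2\}$ together with $v_1$ — verifying legality from $N_G(v_1,v_2)\in\II$ — and then delete $v_1$, whose neighborhood at that stage is $\II$-contractible for the same cone-point reason. The net effect is that the edge $\{v_1,v_2\}$ disappears (since $w$, the "replacement" for $v_1$, is not adjacent to $v_2$) while all other adjacencies are preserved. Throughout, the one structural fact I rely on repeatedly is that any graph with a cone point (a vertex adjacent to all others) is $\II$-contractible, which follows by inductively deleting non-cone vertices via (I1) and finally applying (I1) to the cone point itself once it is isolated; I would state and prove this as a preliminary observation before carrying out the two constructions.
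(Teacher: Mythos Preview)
The paper does not give its own proof of this lemma; it is quoted from \cite{Chen2001}, and the only place the construction is unpacked is Remark~\ref{ConstyMin}, where an edge deletion is realized by gluing a new vertex $12$ adjacent to one endpoint together with (a subset of) that endpoint's neighbors. Your overall strategy---glue an auxiliary vertex $w$ that will serve as a relabelled copy of $v_1$, then delete $v_1$---is exactly the standard argument, and your treatment of edge deletion is correct.

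There is, however, a genuine slip in your edge-gluing case. You settle on $N(w) = N_G(v_1)\cup\{v_2\}$, which does \emph{not} contain $v_1$, and then immediately assert that $N_{G''}(v_1) = N_G(v_1)\cup\{w\}$. Those two claims are inconsistent: if $v_1\notin N(w)$ then $w\not\sim v_1$ and $N_{G''}(v_1)=N_G(v_1)$, which need not lie in $\II$, so the (I1) step fails. Worse, the (I2) step already fails: in the induced subgraph on $N_G(v_1)\cup\{v_2\}$ one can delete $v_2$ (its neighborhood there is $N_G(v_1,v_2)\in\II$), but what remains is $N_G(v_1)$, which is not assumed to be $\II$-contractible.

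The fix is the one you applied correctly in the deletion case: include $v_1$ in the glued neighborhood, taking $N(w)=N_G[v_1]\cup\{v_2\}=N_G(v_1)\cup\{v_1,v_2\}$. Then (I2) is legal because in this induced subgraph $v_2$ has neighborhood $N_G(v_1,v_2)\in\II$, and after deleting $v_2$ one is left with $N_G[v_1]$, a cone with apex $v_1$. And (I1) is legal because now $w\sim v_1$, so $N_{G''}(v_1)=N_G(v_1)\cup\{w\}$ is a cone with apex $w$. After deleting $v_1$ the vertex $w$ has neighborhood $N_G(v_1)\cup\{v_2\}$, so the resulting graph is isomorphic to $G$ with the edge $\{v_1,v_2\}$ added, via $w\leftrightarrow v_1$.
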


It then follows that in our definition of $\II$-contractible graphs we in fact only needed the transformations (I1) and (I2) to define the class. In this paper we will primarily focus on classes of graphs defined by other subsets of the transformations described in Definition \ref{IF}. In particular we define the following classes.

\begin{definition}%\label{SIC} 
The class of \defi{strong $\II$-contractible graphs}, denoted $\SIC$, is defined as follows:
\begin{enumerate}
\item The trivial graph $K(1)$ is in $\SIC$.
\item A graph is in $\SIC$ if it can be obtained from a strong $\II$-contractible graph $G$ by applying a sequence of the following transformations:

%Any graph of $\II$ can be obtained  from $K(1)$  by the following transformations.
\begin{enumerate}
\item[(I2)] Gluing  a vertex:  If $G^\prime$ is a subgraph of $G$ satisfying $G^\prime \in \SIC$, then a vertex $v$ not in $G$ can be added to produce a graph $G^{\prime \prime}$ such that $N_{G''}(v)= G^\prime$;
\item[(I4)] Gluing an edge:  For two non-adjacent vertices $v_1$ and $v_2$ of $G$, the edge $\{v_1,v_2\}$ can be glued to $G$ whenever $N_G(v_1,v_2) \in \SIC$.
\end{enumerate} 
\end{enumerate}

We will say that a vertex $v$ (resp. edge $e$) in $G$ is  \defi{$\SIC$-contractible} if $N_G(v)$ (resp. $N_G(e)$) is in $\SIC$.
\end{definition}

\begin{definition}\label{SVIC} The class of \defi{strong vertex $\II$-contractible graphs}, denoted by $\SVIC$, is defined as follows:
\begin{enumerate}
\item The trivial graph $K(1)$ is in $\SVIC$.
\item A graph is in $\SVIC$ if it can be obtained from a strong vertex $\II$-contractible graph $G$ via a sequence of the following operation:

%Any graph of $\II$ can be obtained  from $K(1)$  by the following transformations.
\begin{enumerate}
\item[(I2)] Gluing  a vertex:  If $G^\prime$ is a subgraph of $G$ satisfying $G^\prime \in \SVIC$ then a vertex $v$ not in $G$ can be added to produce a graph $G^{\prime \prime}$ such that $N_{G''}(v) = G^\prime$.

\end{enumerate} 
\end{enumerate}

We will say that a vertex $v \in G$ is \defi{$\SVIC$-contractible} if $N_G(v) \in \SVIC$.
 \end{definition}

A connected graph $G$ is said to be a \defi{cone} if there exists a vertex $v$ that is adjacent to all other vertices.  We observe that cones are strong vertex $\II$-contractible.  Indeed, for any vertex $w \neq v$ we have that both $N_G(w)$ and  $G - w$ are cones (with cone point $v$) and hence strong vertex $\II$-contractible by induction. 
%\end{proof}

\subsection{Simplicial complexes}
To motivate our graph-theoretic definitions we discuss some analogous constructions in the setting of simplicial complexes.  For this we recall the following notions.

\begin{definition}
Suppose $\Delta$ is a simplicial complex and $\sigma \in \Delta$ is a face.  Then the \defi{link}, \defi{deletion}, and \defi{face deletion} of $\sigma$ are the subcomplexes of $\Delta$ defined as
\begin{align*}
\mathrm{link}_{\Delta}(\sigma) &= \{\tau \in \Delta: \sigma \cup \tau \in \Delta, \sigma \cap \tau = \varnothing\}, \\
\mathrm{del}_{\Delta}(\sigma) &= \{\tau \in \Delta: \sigma \cap \tau = \varnothing\}, \\
\mathrm{fdel}_{\Delta}(\sigma) &= \{\tau \in \Delta: \sigma \nsubseteq \tau\}. 
\end{align*}
\end{definition}

%Note that if $\sigma = \{v\}$ is a single element (a vertex) then the notion of deletion and face deletion coincide.  Also note that if $\Delta$ is a flag simplicial complex then if $\sigma$ has $|\sigma| \geq 3$ then $\text{fdel}_{\Delta}(\sigma)$ is no longer flag in general. 

The class of \emph{collapsible} simplicial complexes is typically defined in terms of free faces and the notion of elementary collapses.  We will use an equivalent formulation from \cite[Definition 3.14]{Jonsson2008}.

\begin{definition}\label{def:collapsible}
The  class of \defi{collapsible} simplicial complexes is defined recursively as follows:
\begin{itemize}
    \item The void complex $\varnothing$ and any 0-simplex $\{\varnothing, \{v\}\}$ are collapsible.
    \item If $\Delta$ contains a nonempty face $\sigma$ such that the face deletion $\text{fdel}_{\Delta}(\sigma)$ and $\mathrm{link}_\Delta(\sigma)$ are collapsible, then $\Delta$ is collapsible.
\end{itemize}
\end{definition}

One can see that collapsible complexes are contractible but the converse is not true. If we insist that the face $\sigma$ in the second condition of Definition \ref{def:collapsible} is a singleton, we obtain another well known class of simplicial complexes.

\begin{definition}
The class of \defi{nonevasive} simplicial complexes is defined recursively as follows:
\begin{itemize}
    \item The void complex $\varnothing$ and any 0-simplex $\{\varnothing, \{v\}\}$ are collapsible.
    \item If $\Delta$ contains a vertex $v$ such that the face deletion $\text{fdel}_{\Delta}(v)$ and $\mathrm{link}_\Delta(v)$ are collapsible, then $\Delta$ is collapsible.
\end{itemize}
\end{definition}

It is clear that nonevasive complexes are collapsible, but the converse is not true. With these definitions we see that the class of strong $\II$-contractible graphs can be thought of as a graph-theoretical analogue of collapsible simplicial complexes.   Indeed one can check that the clique complex of a $\SIC$-contractible graph is collapsible \cite{Espinoza2018}, although it is in an open question whether the reverse implication holds. We recall that if $\Delta$ is a flag simplicial complex then if $\sigma$ has $|\sigma| \geq 3$ then $\text{fdel}_{\Delta}(\sigma)$ is no longer flag, and hence the definitions do not directly translate.   In \cite{Espinoza2018} it is conjectured that if $\Delta(G)$ is a flag collapsible complex then $G$ is strong $\II$-contractible, and some computational evidence for this conjecture is provided.  We refer to Section \ref{sec:further} for more discussion.

Similarly, the class of strong vertex $\II$-contractible graphs can be seen as a graph theoretical analogue of nonevasive simplicial complexes.  Indeed we show in Section \ref{sec:dismantlable} that a flag simplicial complex $\Delta$ is nonevasive if and only if it is the clique complex of a strong vertex $\II$-contractible graph.  Hence  $\SVIC$ corresponds to the class of \emph{non-evasive} graphs, as defined in \cite{Fieux2020}.

\subsection{Dismantlings}

For our study we will need some further notions from combinatorial topology that have analogues in graph theory.  In \cite{Barmak2011} Barmak and Minian study the notion of a \emph{strong collapse} of a simplicial complex and related notions of \emph{$k$-collapsibility}. In what follows a vertex $v$ of a simplicial complex $\Delta$ is a \defi{simplicial cone} if there exists a vertex $v^\prime \in \mathrm{link}_{\Delta}(v)$ such that any simplex $\sigma \in \mathrm{link}_{\Delta}(v)$ can by written as $\{v'\} \cup \tau$, for some $\tau \in \mathrm{link}_{\Delta}(v)$. Following the notation in \cite{Barmak2011} we have the following definitions.

\begin{definition}
A vertex $v$ of a finite simplicial complex $\Delta$ is \defi{$0$-collapsible} if the $\mathrm{link}_{\Delta}(v)$ is a simplicial cone.  A simplicial complex $\Delta$ is \defi{$0$-collapsible} if we can apply a sequence of deleting $0$-collapsible vertices that ends in a single vertex.

Inductively we say that a vertex $v$ is \defi{$k$-collapsible} if the complex  $\mathrm{link}_{\Delta}(v)$ is $(k-1)$-collapsible. We say that $\Delta$ is \defi{$k$-collapsible} if we can obtain a single vertex via a sequence of deleting $k$-collapsible vertices.
\end{definition}

It is not hard to see that a $0$-collapsible complex is collapsible (and hence contractible). One motivation for studying $0$-collapsible complexes come from the fact that the order of deleting vertices in such a complex does not matter.  As is proved in \cite{Barmak2011}, if $\Delta$ is $0$-collapsible then a greedy algorithm of removing $0$-collapsible vertices will always end with a single vertex.  This is in contrast to the class of collapsible complexes, where one can ``get stuck'' in the process of performing elementary collapses.  In particular not all collapsible complexes are $0$-collapsible. The notion of $k$-collapsibility also relates to the notions discussed above, and for instance in \cite{Barmak2011} it is shown that if a  simplicial complex $\Delta$ is $k$-collapsible for some $k\geq 0$ then $\Delta$ is nonevasive.

These notions also have graph theoretical analogues first introduced in \cite{Fieux2020}.  Here a graph $H$ is a \defi{cone} if there exists a vertex $v$ with $N_H(v) = V(H) \backslash v$. 

\begin{definition}
For a graph $G$ and vertex $v \in G$, we say that $v$ is \defi{$0$-dismantlable} (or simply \defi{dismantlable} if the context is clear) if its open neighborhood $N_G(v)$ is a cone.  A graph $G$ is \defi{$0$-dismantlable} (or simply \defi{dismantlable}) if it can be reduced to a single vertex by successive deletions of $0$-dismantlable vertices.

Proceeding inductively, a vertex $v \in G$ is \defi{$k$-dismantlable} if its open neighborhood $N_G(v)$  is $(k-1)$-dismantlable, and a graph $G$ is \defi{$k$-dismantlable} if it can be reduced to a single vertex by successive deletions of $k$-dismantlable vertices.
\end{definition}

%We say that a graph $G$ is \emph{$k$-collapsible} if its clique complex $\Delta(G)$ is $k$-collapsible. \commAD{Is this the same as $k$-dismantlable?} Similarly, a graph $G$ is \emph{nonevasive} if $\Delta(G)$ is nonevasive.

%\begin{definition}
%A graph $G$ is $0$-dismantlable if $N(v)$ is a cone %into $G$
%\end{definition}

\section{Separating the hierarchy}\label{sec:notstrong}
%\section{Acyclic graphs that are not strong vertex \texorpdfstring{$\II$}{I}-contractible}\label{sec:notstrong}

In this section we discuss minimal examples of graphs that distinguish the classes in the chain of containments depicted in \ref{eq:contain}.  We first consider examples of graphs which are strong $\II$-contractible but not strong vertex $\II$-contractible. 

%In Figure \ref{Grupo12} we describe the smallest such graphs, each with $11$ vertices. One has $30$ edges, six have $31$ edges, four have $32$ edges and one has $33$ edges. We present them in four groups, where an arrow indicates that we can pass from one graph to another by adding an edge.

\begin{theorem}\label{homologia-trivial-nsvic}
The smallest graphs (in terms of vertices) that have trivial homology but which are not strong vertex $\II$-contractible are depicted in Figure \ref{Grupo12}. Furthermore these graphs are all strong $\II$-contractible. Hence for graphs on at most ten vertices, the class of acyclic, $\II$-contractible, strong $\II$-contractible, and strong vertex $\II$-contractible graphs all coincide.
\end{theorem}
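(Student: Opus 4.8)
## Proof Plan for Theorem \ref{homologia-trivial-nsvic}

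The plan is to establish this result as a finite—but substantial—computer-assisted verification, organized around a clear division into an \emph{exhaustive search} part and a \emph{hand-checkable} part. First I would set up the computational framework: enumerate all isomorphism types of connected graphs on at most $11$ vertices (via \texttt{nauty} or an equivalent), and for each one compute the (integer) homology of its clique complex $\Delta(G)$ using Ripser \cite{Bauer2021}. This isolates the acyclic graphs. For each acyclic graph I would then run the $\SVIC$-recognition routine from \cite{gcs}: recursively search for an $\SVIC$-contractible vertex $v$ (one whose open neighborhood $N_G(v)$ is itself, recursively, strong vertex $\II$-contractible), delete it, and recurse, backtracking over all choices since order matters (this is precisely why Theorem \ref{minima-de-12} exists). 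The key claim of the first sentence is then the assertion that for $n \le 10$ every acyclic connected graph is recognized as $\SVIC$, while for $n = 11$ exactly the graphs in Figure \ref{Grupo12} fail—and I would present the output of this search as the proof, with a description of how the recognition algorithm terminates (the recursion depth is bounded by $|V(G)|$ and neighborhoods strictly shrink, so the whole procedure halts).

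The second sentence—that the exceptional graphs in Figure \ref{Grupo12} are nonetheless strong $\II$-contractible—is the part I would verify by hand (or semi-by-hand), since it is a positive statement about finitely many explicit small graphs. For each graph $G$ in the figure I would exhibit an explicit sequence of $\SIC$-transformations, i.e. vertex gluings (I2) and edge gluings (I4), reducing $G$ to $K(1)$; equivalently, reading the sequence backwards, a way to build $G$ up from a point. In practice it is cleaner to run the recursion in the deletion direction using the dual moves: repeatedly find a vertex or edge whose (common) neighborhood is strong $\II$-contractible and remove it. Since edge gluing is available here (unlike in the $\SVIC$ setting), the extra flexibility is exactly what lets these graphs collapse; I would display one witnessing sequence per graph. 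Combined with the inclusion $\SIC \subseteq \VIC = \II$ and Corollary \ref{thm:contractible}, this shows each such graph is also $\II$-contractible and acyclic.

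The final ``Hence'' sentence is then a pure logical consequence: from \eqref{eq:contain} we have $\SVIC \subseteq \SIC \subseteq \II \subseteq \{\text{acyclic}\}$ for all graphs (the last inclusion by Corollary \ref{thm:contractible}), so to show these four classes coincide on graphs with at most ten vertices it suffices to show every acyclic connected graph on $\le 10$ vertices lies in $\SVIC$—which is exactly what the search in the first sentence established. (Disconnected graphs are never $\II$-contractible and never acyclic in the relevant sense once one accounts for $H_0$, so restricting to connected graphs loses nothing.) I would spell this implication out in a short paragraph so the reader sees that no separate argument is needed.

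The main obstacle is twofold. Computationally, the number of connected graphs on $11$ vertices is large (on the order of $10^9$ if unrestricted, though the acyclic clique-complex condition prunes this drastically), so the write-up must argue that the enumeration is genuinely exhaustive and that the homology computation and the $\SVIC$-recognition are correct and complete—in particular that the backtracking search over vertex-deletion orders is not truncated, since a graph is declared non-$\SVIC$ only after \emph{every} deletion order has been shown to get stuck. The honest mathematical content that cannot be outsourced is the by-hand verification in the second sentence: producing explicit $\SIC$-collapsing sequences for the graphs of Figure \ref{Grupo12}, which is where I expect the real work (and the risk of error) to lie, and which I would double-check against the independent implementation in \cite{gcs}.
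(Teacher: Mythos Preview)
Your proposal is correct and follows essentially the same computer-assisted strategy as the paper: enumerate connected graphs on $\le 11$ vertices (the paper uses McKay's database \cite{McKay}), filter by trivial homology via Ripser, test $\SVIC$ membership algorithmically, and then certify $\SIC$ membership for the exceptional $11$-vertex graphs by exhibiting explicit edge/vertex reductions. The only differences are tactical: the paper's Algorithm~\ref{SIVC} is greedy with no backtracking (which happens to suffice here), and the paper handles the final sentence by a separate software check that every acyclic graph on $\le 11$ vertices lies in $\SIC$, whereas your deduction of the $n\le 10$ coincidence directly from the containments in \eqref{eq:contain} is cleaner and already sufficient.
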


\begin{proof}
To determine the graphs in Figure \ref{Grupo12}, we first obtained a list of all isomorphism types of graphs on at most 11 vertices from McKay's online collection \cite{McKay}. We then used an adaptation of the Ripser software \cite{Bauer2021} to determine those graphs whose clique complexes have trivial homology.  We then applied Algorithm \ref{SIVC} to determine those graphs that are not in $\SVIC$, the result is depicted in Figure \ref{Grupo12}.  

To verify that the graphs in Figure \ref{Grupo12} are indeed strong $\II$-contractible, we first note that an arrow indicates that one can pass from a graph to another by adding an $\II$-contractible edge.  One can check that the source elements in the resulting figure (the graphs along the bottom row) have the property removing any of the $\II$-contractible edges (depicted in red), results in a graph that has an $\II$-contractible vertex. 

In particular for the graph on the bottom left the removal of the $\II$-contractible edge $46$ leads to the $\II$-contractible vertex $5$. For the next two graphs in the bottom row, the removal of the edge $37$ leads to the $\II$-contractible vertex $7$.  For the next graph, removing the edge $47$ leads to the $\II$-contractible vertex $6$. Finally for the graph on the bottom right we delete the edge $49$ to obtain the $\II$-contractible vertex $11$.  

Among graphs that have 11 or fewer vertices, we used software to check that all those with acyclic clique complexes are in fact in the class $\SIC$. This establishes the last claim.
\end{proof}

\begin{algorithm}
    \SetKwInOut{Input}{Input}
    \SetKwInOut{Output}{Output}

    \Input{A graph $G$ and the cardinality $n$ of the vertex set.}
    \Output{The logical \texttt{TRUE} if $G \in \SVIC$, or \texttt{FALSE} otherwise.}
    \eIf{$n = 0$}
      {
        return \texttt{FALSE}
      } 
      {
            \eIf{$n = 1$}
      {
        return \texttt{TRUE}
      } {
        \For{$i \leftarrow 1$ \KwTo $n$}{
                \If{\normalfont \texttt{SIVcontractible.graph}($N_G(v_i), |N_G(v_i)|$) = \texttt{TRUE}}{
            \Return \texttt{SIVcontractible.graph}($G- v_i, n-1$)
            }
        }
    
        }
      \Return \texttt{FALSE}
      }
          
    \caption{\texttt{SIVcontractible.graph}}
    \label{SIVC}
\end{algorithm}

%\begin{corollary}
%The graphs in Figure \ref{Grupo12} are the smallest graphs (in terms of vertices) that are strong $\II$-contractible but not strong vertex $\II$-contractible. 
%\end{corollary}

  We expect that there exist graphs that have trivial homology but which are not $\II$-contractible (for instance the 1-skeleton of a flag triangulation of the 2-skeleton of the Poincar\'e homology sphere) but these will all have many more vertices than the graphs depicted in Figure \ref{Grupo12}. 

%\begin{corollary}\label{tenorless}
% The family of acyclic, $\II$-contractible, strong $\II$-contractible, and strong vertex $\II$-contractible graphs all coincide for the class of graphs on at most ten vertices.
%\end{corollary}

\begin{figure}[htbp] 
\centering
  \begin{subfigure}{0.33\textwidth}
    \centering
    \includegraphics[width=\linewidth]{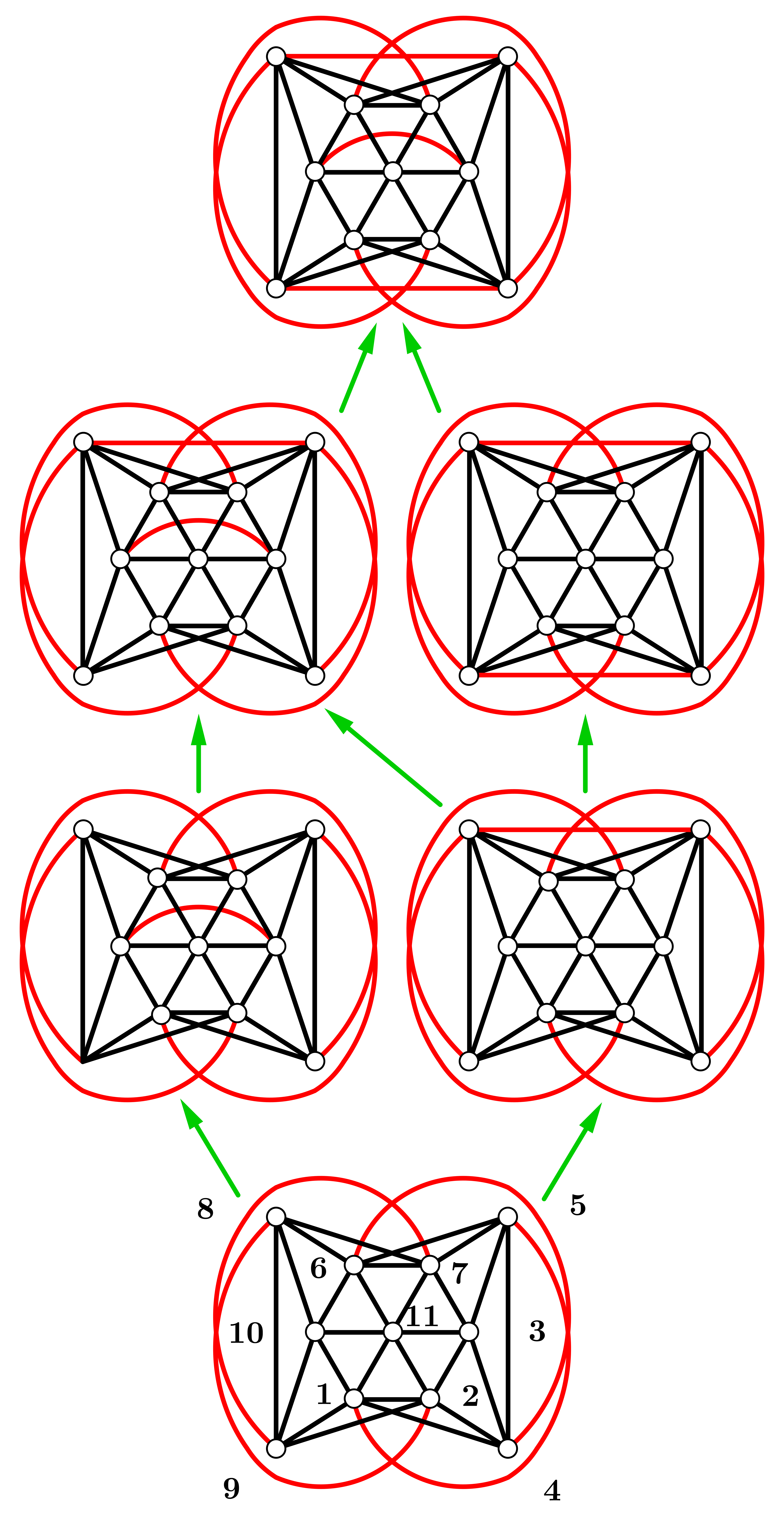}
    \end{subfigure}\hfill
  \begin{subfigure}{0.33\textwidth}
    \centering
    \includegraphics[width=0.95\linewidth]{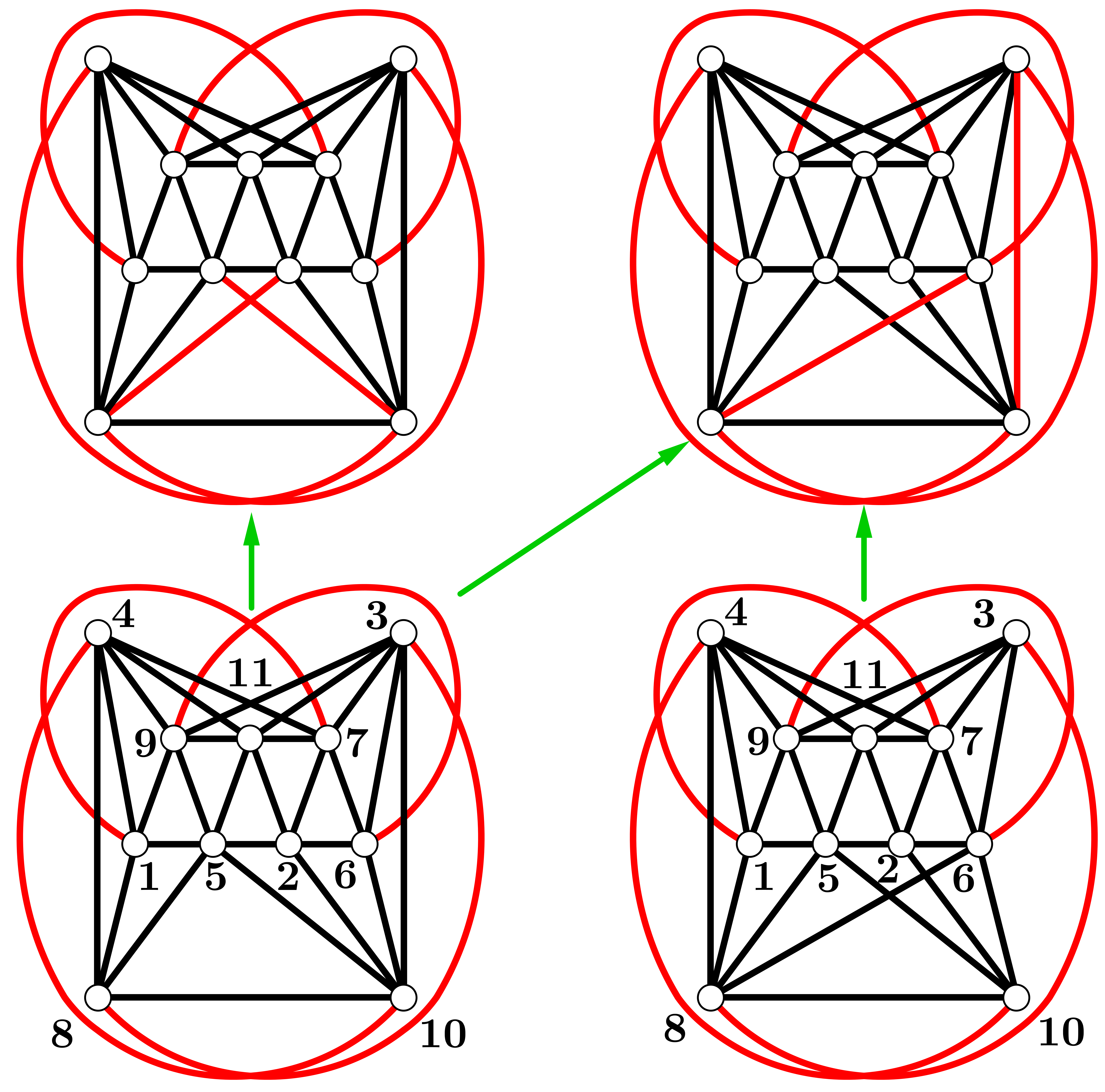}
  \end{subfigure}
  \begin{subfigure}{0.33\textwidth}
    \centering
    \vspace{70pt}
    \includegraphics[width=\linewidth]{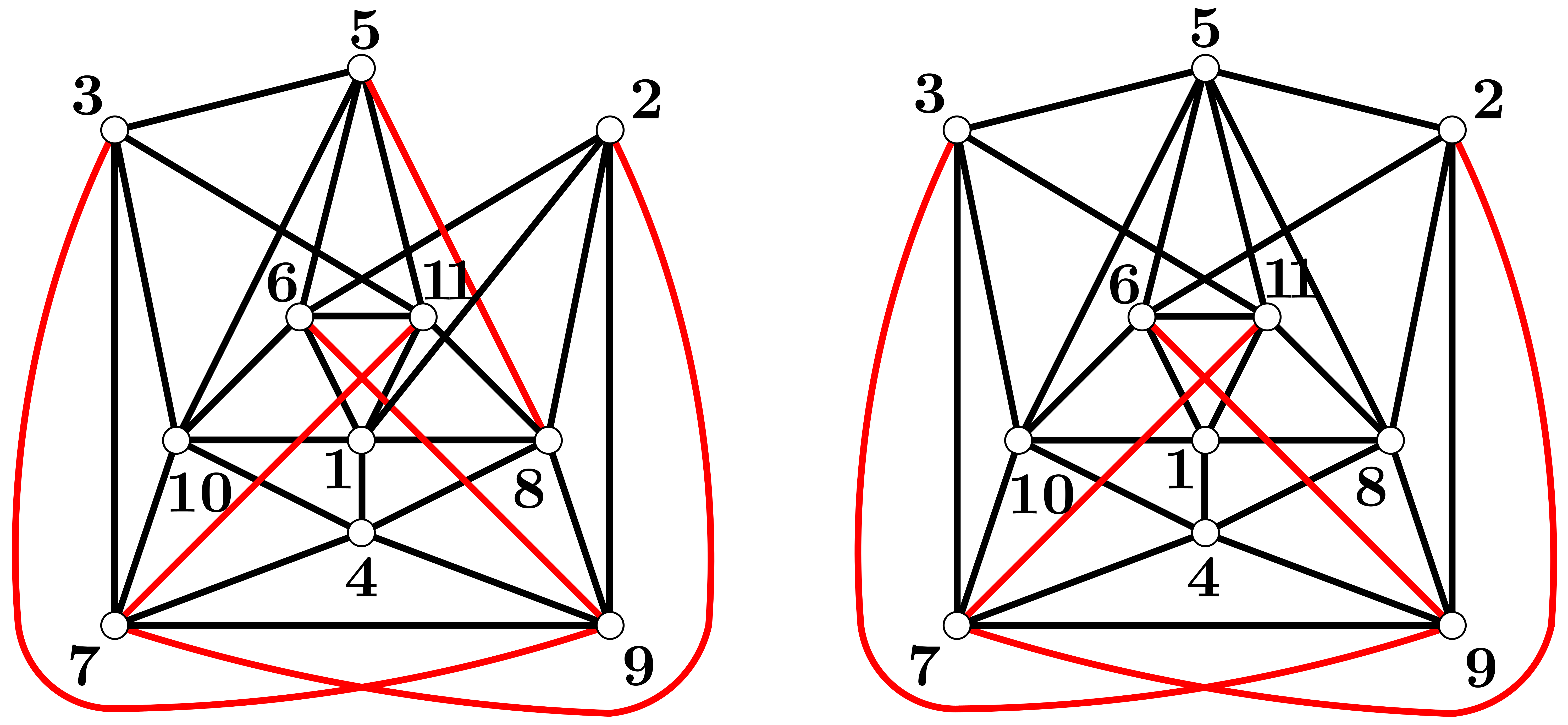}
  \end{subfigure}
  \caption{Minimal examples of graphs that are in $\SIC$ but not in $\SVIC$. An arrow indicates that we can pass from one graph to the other by adding an $\II$-contractible edge according to rule I4 of Definition \ref{IF}.}
  \label{Grupo12}
\end{figure}

%\item
%If the graph contains a complete graph $K_4$ (corresponding to the 1-skeleton of a tetrahedron in the clique complex) we color the edges green.

%\item
%If there is an edge that can be deleted to preserve $\II$-contractibility (that is, has property $(I3)$ from Definition \ref{IF}) then we color the edge red.

%\item
%The remaining crossing edges we color orange.
%\end{enumerate}

%In addition, in the figures that depict the addition of vertices and edges, we use blue to depict the new edges.  Although we have added these colors to aid the reader, we also note that the colors in the figures are not essential to understand the figures, and the reader can perfectly manage without them.

\begin{remark}
From Theorem \ref{homologia-trivial-nsvic} it follows that the family of graphs on 11 vertices with trivial homology differs from the family of strong vertex $\II$-contractible graphs on 11 vertices.  In particular, any graph in Figure \ref{Grupo12} has trivial homology but is not in $\SVIC$. 

On the other hand, we observe that any strong vertex $\II$-contractible graph $G$ on at most 11 vertices has the following greedy property: the removal of \emph{any} $\SVIC$-contractible vertex will lead to a strong vertex $\II$-contractible graph, where again the removal of any $\SVIC$-contractible vertex will result in a graph in $\SVIC$.  In other words the choice of $\SVIC$-contractible vertices does not matter as we collapse to a single vertex. This follows from the fact that for graphs with at most 10 vertices, the classes of $\SIC$ and $\SVIC$ coincide. 

%In particular, if $G \in \SVIC$ had 11 vertices and did not satisfy the greedy property described above, then there would exist a graph on 10 or fewer vertices that had trivial homology but was not in $\SVIC$, a contradiction.

In addition, there exist graphs on 11 vertices that are in $\SIC$ but not $\SVIC$, but which become strong vertex $\II$-contractible with this greedy property after the removal of any $\SIC$-contractible edge (see for instance the five graphs in bottom in Figure \ref{Grupo12}, the label one).  We will see in Section \ref{sec:order} that the order of vertex removals does matter for strong vertex $\II$-contractible graphs with at least 12 vertices.
\end{remark}

%if $v \in G$ is any contractible vertex, then $G - v$ is again completely strong vertex $\II$-contractible.

%\begin{itemize}
    %\item If $G_n := G$ and $N_{G_i}(v_{k_i}) \in \SVIC$, then $G_{i-1} := G_{i} - v_{k_i} \in \SVIC$ for every $i = n, n-1, \ldots, 2.$
%\end{itemize}

% [FUTURE WORK, MAYBE] Construir un algoritmo que busque, hasta encontrar, un camino que pruebe que una grafica es SVIC. Derivarlo del algoritmo que verifica si una grafica es SVIC para todo camino. Este algoritmo es necesario para continuar con el estudio de graficas SVIC de mas de 12 vertices.

%\subsection{\texorpdfstring{$\II$}{I}-contractible but not strong \texorpdfstring{$\II$}{I}-contractible}

We next consider graphs that are $\II$-contractible but not strong $\II$-contractible, addressing the second containment depicted in \ref{eq:contain}.  In \cite{Chen2001} a construction of such a graph on 21 vertices is described, based on ``the house with 2 rooms'' (or \emph{Bing's house}).  Recall that Bing's house is a 2-dimensional simplicial complex that is contractible but not collapsible, and hence (according to our discussion above) is a natural place to look for such an example.   We will be interested in graphs that are minimal with this property. By considering a triangulation of the Dunce Hat we obtain a graph on $15$ vertices with this property.

\begin{proposition}\label{prop:dunce}
The graph in Figure \ref{DunceA} is $\II$-contractible  but is not strong $\II$-contractible.
\end{proposition}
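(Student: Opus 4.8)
The plan is to establish the two halves of the claim separately, using very different tools for each. For the positive direction—showing that the graph $G$ of Figure \ref{DunceA} is $\II$-contractible—the natural strategy is to exhibit an explicit sequence of the transformations (I1)--(I4) reducing $G$ to $K(1)$. Since the Dunce Hat is contractible (though not collapsible), a bare collapse will not work; we will need to make essential use of a non-gluing, non-deleting-in-order move. Concretely, I would first identify a vertex $v$ whose open neighborhood $N_G(v)$ is already a path or a cone (hence $\II$-contractible by the observation that cones lie in $\SVIC \subset \II$), delete it via (I1), and repeat as far as possible. When the greedy deletion stalls—which it must, since the complex is not collapsible—I would introduce an $\II$-contractible edge via (I4) to ``open up'' a collapsible configuration, exactly mimicking the standard trick by which the Dunce Hat, while not collapsible, still admits a simple-homotopy reduction to a point after one expansion. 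By Lemma \ref{chenlemma} every such edge move is anyway a composition of (I2) and (I1), so the verification reduces to checking that at each stage the relevant (common) neighborhood is $\II$-contractible; these neighborhoods are small graphs (paths, cones, or complete graphs) for which membership in $\II$ is immediate. The bookkeeping here is routine but lengthy, and is best presented as an annotated list of moves on the labeled vertices of Figure \ref{DunceA}.

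For the negative direction—showing $G$ is \emph{not} strong $\II$-contractible—the key point is that by \cite{Espinoza2018} a strong $\II$-contractible graph has collapsible clique complex, whereas $\Delta(G)$ is a triangulation of the Dunce Hat, which is the prototypical contractible-but-not-collapsible $2$-complex. So it suffices to verify that the graph in Figure \ref{DunceA} really is a flag triangulation of the Dunce Hat, i.e. that $\Delta(G)$ is simplicially isomorphic to (a subdivision of) the standard Dunce Hat. I would do this by displaying the triangle list of $\Delta(G)$, checking that it is a closed surface-with-identifications of the expected combinatorial type: exhibit the single $2$-cell as a triangulated triangle whose three boundary edges are identified with the same orientation, confirm that the underlying space is simply connected with trivial homology (so it is a contractible $2$-complex), and then invoke the classical fact that such an object is not collapsible. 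Alternatively, and perhaps more cleanly for a self-contained argument, one can simply run the collapsibility test directly on $\Delta(G)$: show that $\Delta(G)$ has no free face at all, so no elementary collapse can even begin, which immediately gives non-collapsibility and hence, via \cite{Espinoza2018}, that $G \notin \SIC$.

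I expect the main obstacle to be the positive direction: producing a correct and checkable sequence of $\II$-moves. The difficulty is not conceptual—we know the Dunce Hat is simple-homotopy trivial—but combinatorial, since one must track the evolving neighborhoods through roughly a dozen moves and be sure that each intermediate neighborhood is genuinely in $\II$. A clean way to control this is to choose the expansion edge so that the resulting graph becomes dismantlable (every subsequent deleted vertex has a cone neighborhood), since then the remaining reduction is entirely mechanical and order-independent. I would organize the write-up so that after the single (I4) expansion, the rest is a sequence of (I1) deletions of $0$-dismantlable vertices, making the verification transparent. For minimality we make no claim here (the statement only asserts the two properties for this particular $15$-vertex graph), so no lower-bound argument is needed; that is deferred to Conjecture \ref{conj:dunce}.
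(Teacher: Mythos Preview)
Your plan matches the paper's on the positive half: the paper likewise glues a single $\II$-contractible edge (namely $\{5,7\}$) and then reduces the resulting graph of Figure~\ref{DunceB} to a cone. One caution, though: after that single (I4) move the paper's explicit reduction is \emph{not} by $0$-dismantlable vertex deletions alone---it deletes vertex $6$, then the edges $\{1,5\}$ and $\{1,7\}$, and only then the vertices $2,9,8,3,4,1$---so your hope that one edge expansion already makes the graph $0$-dismantlable is likely too optimistic, and you should be prepared to interleave a couple of further edge moves. On the negative half your argument via \cite{Espinoza2018} is actually more explicit than anything the paper writes down (the paper essentially leaves this to the Dunce Hat intuition and computation). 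Two small points to tighten there: collapsibility is combinatorial rather than topological, so ``the Dunce Hat is not collapsible'' must be read as a statement about this particular triangulation, and for your no-free-face check to suffice you also need to verify that $\Delta(G)$ is genuinely $2$-dimensional, i.e.\ that $G$ contains no $K_4$. With those checks in hand your alternative route---directly exhibiting that $\Delta(G)$ has no free face---is the cleanest way to finish.
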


 Unfortunately our computational power is exhausted on graphs up to eleven vertices.  In particular we do not know if there are graphs on $n = 12,13,14$ vertices that are $\II$-contractible but not strong $\II$-contractible, although we conjecture that this is not the case.
 
\begin{figure}[htbp] 
\centering
  \begin{subfigure}{.5\textwidth}
    \centering

    \includegraphics[width=.825\linewidth]{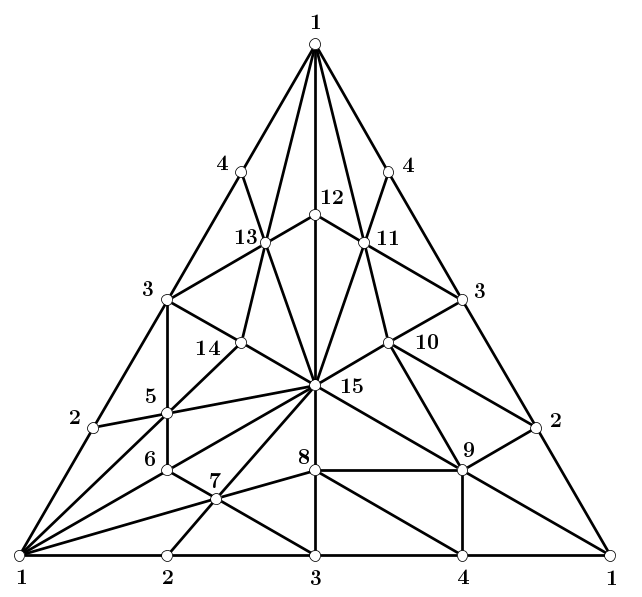}
    \caption{}
    \label{DunceA}
    \end{subfigure}\hfill
  \begin{subfigure}{.5\textwidth}
    \centering
    \includegraphics[width=.825\linewidth]{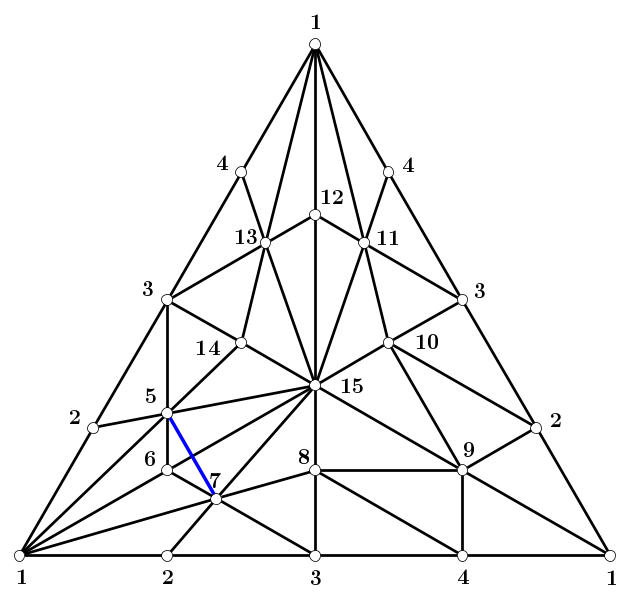}
    \caption{}
    \label{DunceB}
  \end{subfigure}
  \caption{{Graphs related to the notion of strong $\II$-contractibility (with identifications made as indicated).  The graph on the left is in $\II$ but not $\SIC$. The graph on the right is in $\SIC$ and is obtained from the first graph by adding the $\II$-contractible edge $\{5,7\}$.}
  \label{Dunce}}
\end{figure}

\begin{conjecture}\label{conj:dunce}
The smallest graph (in terms of vertices) with trivial homology that it is not strong $\II$-contractible is shown in Figure \ref{DunceA}. In particular this is the smallest graph that is $\II$-contractible but not strong $\II$-contractible. 
\end{conjecture}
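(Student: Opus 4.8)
The plan is to prove the conjecture by combining the base case already furnished by Theorem~\ref{homologia-trivial-nsvic} with a structural reduction that tightly constrains any hypothetical minimal counterexample, followed by a heavily pruned exhaustive search on $12$, $13$, and $14$ vertices. First note that the ``in particular'' clause is automatic: a graph that is $\II$-contractible is contractible by Corollary~\ref{thm:contractible}, hence acyclic, so every $\II$-contractible non-$\SIC$ graph is in particular an acyclic non-$\SIC$ graph; since the graph of Figure~\ref{DunceA} is itself $\II$-contractible, the two minima coincide once the first sentence is established. Now Theorem~\ref{homologia-trivial-nsvic} shows there is no acyclic non-$\SIC$ graph on at most $11$ vertices, while Proposition~\ref{prop:dunce} exhibits one on $15$ vertices. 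Thus everything reduces to the single assertion that \emph{no} graph on $12$, $13$, or $14$ vertices has trivial homology yet fails to be strong $\II$-contractible.

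The provable heart of the argument is a reduction showing that a minimal counterexample can have no $\SIC$-contractible vertex and no $\SIC$-contractible edge. Let $G$ be an acyclic non-$\SIC$ graph with the fewest vertices, and suppose some $v\in G$ had $N_G(v)\in\SIC$. Since $\SIC\subseteq\II$, deleting $v$ by move (I1) preserves the simple homotopy type of the clique complex by \cite{Chen2001}, so $G-v$ is again acyclic; having fewer vertices, minimality forces $G-v\in\SIC$, whence re-gluing $v$ by move (I2) (legal as $N_G(v)\in\SIC$) puts $G\in\SIC$, a contradiction. Hence $N_G(v)\notin\SIC$ for every $v$, and because $|N_G(v)|<|V(G)|$ while minimality makes \emph{every} acyclic graph on fewer than $|V(G)|$ vertices lie in $\SIC$, we conclude that \emph{every neighborhood $N_G(v)$ has nontrivial reduced homology}. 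In particular $G$ has no $0$-dismantlable vertex, since a cone neighborhood lies in $\SVIC\subseteq\SIC$. Analogously we may assume $G$ is edge-minimal among such counterexamples: if an edge $\{v_1,v_2\}$ had $N_G(v_1,v_2)\in\SIC$ we could delete it by move (I3), again homotopy-type preserving, to obtain an acyclic graph on the same vertex set with fewer edges that is still non-$\SIC$ (re-gluing the edge by (I4) would otherwise place $G$ in $\SIC$). For such an edge-minimal $G$ the common neighborhood $N_G(v_1,v_2)$ of every edge is likewise non-acyclic.

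This ``all links carry homology'' profile is extremely restrictive: it says $\Delta(G)$ is an acyclic flag complex all of whose vertex links (and edge links) have nontrivial reduced homology, which is precisely the local shape of a flag triangulation of a contractible-but-not-collapsible complex such as the Dunce Hat, nicely explaining why $15$ vertices is plausibly minimal. Computationally one would, for each $n\in\{12,13,14\}$, generate connected graphs with \texttt{nauty}/\texttt{geng}, immediately discard any graph possessing a vertex or edge whose (edge) neighborhood has trivial reduced homology — a cheap per-vertex test via an adaptation of Ripser~\cite{Bauer2021} on an induced subgraph with at most $13$ vertices — discard those that are not globally acyclic, restrict to edge-minimal representatives, and finally run the recursive $\SIC$-membership test (the $\SIC$ analogue of Algorithm~\ref{SIVC}, allowing both vertex gluings (I2) and edge gluings (I4)) on each survivor. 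Verifying that every survivor lies in $\SIC$ contradicts the existence of a counterexample on $12$--$14$ vertices, which together with the base case and Proposition~\ref{prop:dunce} pins the minimum at exactly $15$.

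The main obstacle is one of scale. There are roughly $3\times 10^{16}$ graphs on $14$ vertices, so naively generating them all in order to apply the neighborhood filter is infeasible — this is exactly where the authors' computation is exhausted. The difficulty is that the decisive filter, ``every link has nontrivial reduced homology,'' is neither hereditary nor monotone and so cannot be pushed directly into the orderly generation performed by \texttt{geng}; making the search tractable would require either an orderly construction tailored to locally non-acyclic flag complexes, or an independent combinatorial-topology bound on the number of vertices of an acyclic flag complex all of whose vertex links are non-acyclic. A secondary obstacle is that the $\SIC$-membership test is itself a potentially exponential backtracking search over sequences of gluings, so even a modest surviving family could be costly to certify. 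Absent such an improvement, the structural reduction above already collapses the problem to a sharply constrained finite search, which we regard as strong evidence for the conjecture.
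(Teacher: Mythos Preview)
The statement you are attempting to prove is an open \emph{conjecture} in the paper, not a theorem: the authors explicitly state that their computational power is exhausted at eleven vertices and that they do not know whether graphs on $12$, $13$, or $14$ vertices can be $\II$-contractible but not strong $\II$-contractible. There is therefore no proof in the paper to compare your proposal against.

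Your proposal itself is not a proof, as you candidly acknowledge in its final paragraph. The structural reduction you give is correct and useful: a vertex-minimal acyclic non-$\SIC$ graph must have every vertex link non-acyclic, and an edge-minimal such graph on that vertex set must have every edge link non-acyclic as well. Your derivation of the ``in particular'' clause is also sound. But the entire argument hinges on the assertion that the pruned exhaustive search over $n\in\{12,13,14\}$ returns no survivor outside $\SIC$, and you never carry this out; you instead explain why it is presently infeasible (roughly $3\times 10^{16}$ graphs on $14$ vertices, a non-hereditary filter that cannot be pushed into orderly generation, and an expensive backtracking $\SIC$ test on survivors). A proof cannot rest on a computation that has not been performed. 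What you have produced is a well-reasoned reduction of the conjecture to a sharply constrained finite search together with an honest assessment of why that search remains out of reach---valuable as heuristic evidence, but not a proof.
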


We note that if we add the edge $\{5,7\}$ to the graph in Figure \ref{DunceA} according to rule $(I4)$ we obtain the graph in Figure \ref{DunceB}, which can be seen to be strong $\II$-contractible.  Indeed one can see that vertex $6$ is $\SIC$-contractible and a deleting order is given by: vertex $6$, edge $\{1,5\}$, edge $\{1,7\}$, and vertices $2,9,8,3,4,1$, at which point we obtain a graph that is a cone on $15$.

Hence in this case we only need to ``go up'' one step in order to obtain a graph that can be collapsed via removing $\SIC$-contractible edges and vertices. We furthermore conjecture that the graph in Figure \ref{DunceB} is a minimal example of this phenomenon in the following sense.

\begin{conjecture}\label{conj:dunce2}
The graph depicted in Figure \ref{DunceB} is the smallest graph $G$ (in terms of vertices) that has the following properties:
\begin{enumerate}
    \item $G$ is strong $\II$-contractible;
    \item There exists an edge $\{v,w\}$ in $G$ such that $N({v,w})\in \SIC$ and such that $G \backslash \{v,w\}$ is not strong $\II$-contractible.
\end{enumerate}
\end{conjecture}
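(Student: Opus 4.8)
The plan is to reduce Conjecture \ref{conj:dunce2} to Conjecture \ref{conj:dunce} together with a small finite computation. First I would prove the following reduction step: if $G$ is a graph satisfying conditions (1) and (2), witnessed by the edge $\{v,w\}$, then $G \setminus \{v,w\}$ is $\II$-contractible but not strong $\II$-contractible. The ``not $\SIC$'' half is immediate from (2). For $\II$-contractibility, observe that $\SIC \subseteq \II$, so (1) gives $G \in \II$, and (2) gives $N_G(v,w) \in \SIC \subseteq \II$; hence the edge-deletion move (I3) of Definition \ref{IF} applies to the edge $\{v,w\}$ of $G$ and produces $G \setminus \{v,w\}$. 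Since (I3) is one of the $\II$-homotopy transformations, $G \setminus \{v,w\}$ is $\II$-homotopy equivalent to $G$, hence to $K(1)$, so $G \setminus \{v,w\} \in \II$. Therefore $G \setminus \{v,w\}$ belongs to $\II \setminus \SIC$.

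Granting Conjecture \ref{conj:dunce}, every graph in $\II \setminus \SIC$ has at least $15$ vertices, so $|V(G)| = |V(G \setminus \{v,w\})| \ge 15$. To see that $15$ is attained, I would verify that the graph of Figure \ref{DunceB}, which has $15$ vertices, satisfies both conditions: it is strong $\II$-contractible by the explicit collapsing sequence recorded after Conjecture \ref{conj:dunce} (remove the $\SIC$-contractible vertex $6$, then the edges $\{1,5\}$ and $\{1,7\}$, then the vertices $2,9,8,3,4,1$, ending at a cone with apex $15$), and the edge $\{5,7\}$ witnesses (2): indeed $N(\{5,7\}) \in \SIC$ --- this is exactly the hypothesis of the (I4) move that builds Figure \ref{DunceB} from Figure \ref{DunceA} --- while $G \setminus \{5,7\}$ is the graph of Figure \ref{DunceA}, which is not strong $\II$-contractible by Proposition \ref{prop:dunce}. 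Hence Figure \ref{DunceB} is a smallest example.

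To upgrade ``a smallest'' to ``the smallest'', I would also need the uniqueness statement that the graph of Figure \ref{DunceA} is, up to isomorphism, the only $15$-vertex graph in $\II \setminus \SIC$; this is naturally part of Conjecture \ref{conj:dunce}. Granting it, any $15$-vertex $G$ satisfying (1) and (2) has $G \setminus \{v,w\} \cong$ Figure \ref{DunceA}, so $G$ arises from Figure \ref{DunceA} by adding a single edge. It then remains to enumerate, up to the automorphism group of Figure \ref{DunceA}, those non-edges $\{v,w\}$ with $N(\{v,w\}) \in \SIC$ for which the resulting graph lies in $\SIC$, and to check that each such graph is isomorphic to Figure \ref{DunceB}. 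This is a short finite computation, feasible with the code used for Theorem \ref{homologia-trivial-nsvic} and Algorithm \ref{SIVC} (adapted from $\SVIC$ to $\SIC$).

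The main obstacle is Conjecture \ref{conj:dunce} itself. A direct proof would require an exhaustive search over all graphs on $12$, $13$, and $14$ vertices with vanishing homology, testing each for membership in $\SIC$; as noted above, the search already exhausts its computational budget at $11$ vertices, so this route is out of reach at present. A non-computational argument would need a genuine structural lower bound on the number of vertices of a flag triangulation of a contractible but non-collapsible complex (such as the Dunce Hat), together with a proof that no other mechanism produces a small member of $\II \setminus \SIC$; neither seems achievable with current techniques, which is why the statement is recorded as a conjecture.
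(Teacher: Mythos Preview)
The statement you are addressing is recorded in the paper as a \emph{conjecture}, not a theorem, and the paper offers no proof of it. So there is no proof in the paper to compare your attempt against.

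That said, your reduction is sound and is exactly the reasoning implicit in the paper's discussion surrounding Conjectures~\ref{conj:dunce} and~\ref{conj:dunce2}. The implication that any $G$ satisfying (1) and (2) yields $G\setminus\{v,w\}\in\II\setminus\SIC$ is correct, and combined with Conjecture~\ref{conj:dunce} this forces $|V(G)|\ge 15$. Your verification that Figure~\ref{DunceB} attains $15$ simply recapitulates the paragraph the paper places between the two conjectures. The uniqueness step you outline---reducing to an enumeration, modulo the automorphism group of Figure~\ref{DunceA}, of non-edges $\{v,w\}$ with $N(\{v,w\})\in\SIC$ whose addition lands in $\SIC$---is also correct in principle, though note that the paper's phrasing of Conjecture~\ref{conj:dunce} (``the smallest graph'') may or may not be intended to assert uniqueness at the $15$-vertex level; if it is not, your argument delivers only ``a smallest'' rather than ``the smallest''.

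Your assessment of the obstacle is accurate: everything hinges on Conjecture~\ref{conj:dunce}, which the paper explicitly leaves open because the exhaustive search exhausts at $11$ vertices. Your proposal is therefore not a proof but a correct conditional reduction, and that conditional status is precisely why the paper records the statement as a conjecture rather than a theorem.
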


%\section{Small counterexamples to the Ivashchenko axiom}

\begin{algorithm}
    \SetKwInOut{Input}{Input}
    \SetKwInOut{Output}{Output}

    \Input{A graph $G$ and the cardinality $n$ of the vertices set.}
    \Output{The logical \texttt{TRUE} if $G \in \SIC$, or \texttt{FALSE} otherwise.}
    \eIf{$n = 0$}{
      
        \Return \texttt{FALSE}
      }{
      
       \eIf{$n = 1$}{
        \Return \texttt{TRUE}
       }{ \# Delete $\SIC$-contractible vertices\\
           \For{$i\leftarrow 1$ \KwTo $n$}{
             \If{\normalfont \texttt{SIcontractible.graph($N_G(v_i)$, $|N_G(v_i)|$)=TRUE}}{
              \Return \texttt{SIcontractible.graph($G-v_i$, $n-1$)}
             }
          }
          \# Delete $\SIC$-contractible edges\\
           \For{$i\leftarrow 1$ \KwTo $n-1$}{
             \For{$j\leftarrow i+1$ \KwTo $n$}{
               \If{\normalfont $G_{ij}=1$ \& \texttt{SIcontractible.graph}$(N_G(v_i,v_j))$=\texttt{TRUE}}{
                 \Return \texttt{SIcontractible.graph}$(G-(v_i,v_j),n)$\\
               }
          }
          }
          \Return \texttt{FALSE}
       }
      }
    \caption{\texttt{SIcontractible.graph}}
    \label{SIC}
\end{algorithm}
\newpage

\begin{algorithm}
    \SetKwInOut{Input}{Input}
    \SetKwInOut{Output}{Output}
    \Input{A graph $G \in \II$ and the cardinality $n$ of the vertices set.}
    \Output{The logical \texttt{TRUE} if $G$ satisfies the axiom, or \texttt{FALSE} otherwise.}
    %\texttt{Get no universal vertices}\;
    $nVNC \leftarrow 0$\;
    \For{$i \leftarrow 1$ \KwTo $n$}{
        $grade\leftarrow 0$\;
        \For{$j \leftarrow 1$ \KwTo $n$}{
            \If{\normalfont $G_{ij}=1$}{
                $grade \leftarrow grade+1$\;
            }
        }
        \If{($grade<n$) }{
        $nVNC\leftarrow nVNC+1$\;
        $VNC_{nVNC}\leftarrow i$\;
        }
    }
    \texttt{Check axiom:}\;
    \For{$i \leftarrow 1$ \KwTo $nVNC-1$}{
        $CanAddEdge \leftarrow 0$\;
        \For{$j \leftarrow i+1$ \KwTo $nVNC$}{
            \If{\normalfont $G_{VNC_i,VNC_j}=0$}{
                \If{\normalfont \texttt{SIcontractible.graph}($N(G,n,VNC_i,VNC_j)$)=\texttt{TRUE}}{
                    $CanAddEdge\leftarrow 1$\;
                }
            }
        }
        \If{(\normalfont $CanAddEdge=0$)}{
            \Return \texttt{FALSE}
        }
    }
    \Return \texttt{TRUE}
    \caption{\texttt{CheckAxiom}}
    \label{CheckAxiom}
\end{algorithm}

% if $G$ is an $\II$-contractible graph $G$ and if $v \in G$ is any non-universal vertex then one can find a vertex $u\in G$ such that $N_G(u,v)$ is $\II$-contractible

\section{Order matters for strong vertex \texorpdfstring{$\II$}{I}-contractible graphs}\label{sec:order}

Recall that a graph $G$ is strong vertex {$\II$}-contractible if it can be obtained from a single isolated vertex by a sequence of gluings along $\SVIC$-contractible vertices (See Definition \ref{SVIC}).  Hence $G$ can be reduced to a single vertex by a sequence of vertex deletions, and a natural question to ask is whether the ordering of vertices matters in the deletion process.  More specifically, given a strong vertex {$\II$}-contractible graph $G$ and an $\SVIC$-contractible vertex $v$ (where $N_G(v)$ is strong vertex $\II$-contractible), is it always true that $G - v$ is strong vertex {$\II$}-contractible?  

%If this property indeed held, a greedy algorithm could be used to detect whether a graph is strong vertex {$\II$}-contractible, which is especially relevant for implementation and applications.

In \cite{Fieux2020} Fieux and Jouve describe a graph on $16$ vertices that is strong vertex $\II$-contractible, but where the order of vertex deletion matters. In this section we describe the smallest graphs that have this property.  

%In fact they are obtained from adding one additional vertex to the graphs in the last section. 

%For our results we will need the following observation. Recall that a graph $G$ is a \defi{cone} if there exists a vertex $v \in G$ that is adjacent to all other vertices.

%\begin{lemma}\label{lem:cone}
%If a graph $G$ is a cone then $G$ is strong vertex $\II$-contractible.
%\end{lemma}

%\begin{proof}
%We prove this by induction on $n = |V(G)|$. If $n=1$ the statement is clear.  Next suppose $n > 1$ and let $v \in G$ be a cone point.  For any vertex $w \neq v$ we have that both $N_G(w)$ and  $G - w$ are cones (with cone point $v$) and hence strong vertex $\II$-contractible by induction.  The result follows. 
%\end{proof}

\begin{figure}[htbp]
    \centering
    \begin{subfigure}{.325\textwidth}
        \centering
        \includegraphics[width=\linewidth]{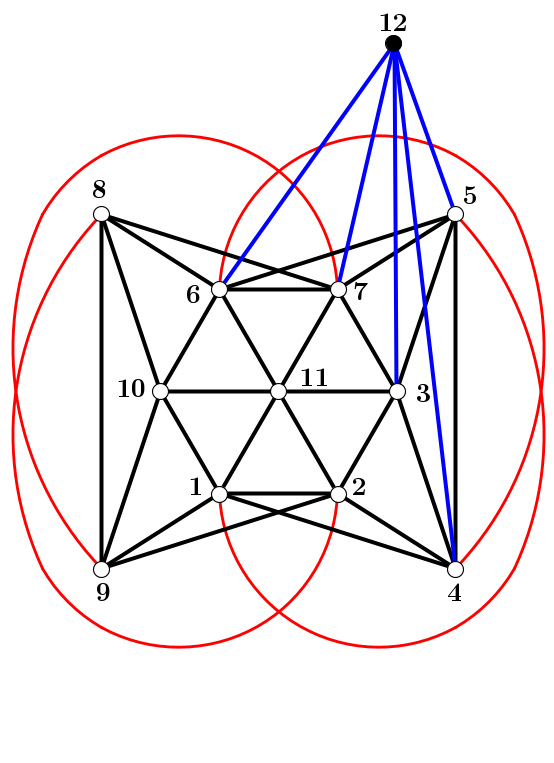}
        \caption{}
        \label{stuckA}
    \end{subfigure}\hspace{40pt}
    \begin{subfigure}{.325\textwidth}
        \centering
        \includegraphics[width=\linewidth]{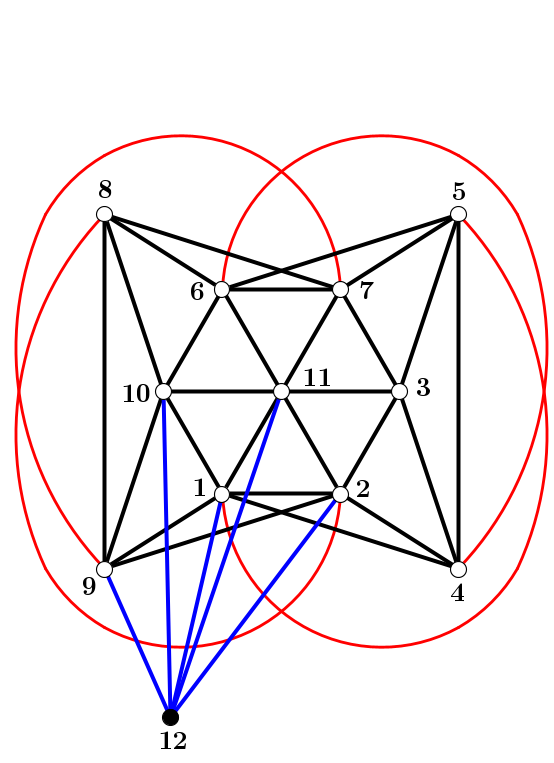}
        \caption{}
        \label{stuckB}
    \end{subfigure}
    \caption{Minimal examples of graphs that are strong vertex $\II$-contractible but where the order of $\SVIC$-contractible vertices matters.}
    \label{stuck}
\end{figure}

\begin{theorem}\label{minima-de-12}
The graphs depicted in Figure \ref{stuck} are the smallest graphs $G$ (in terms of vertices) that have the property that
\begin{enumerate}
    \item $G$ is strong vertex $\II$-contractible (that is, $G$ is in $\SVIC$);
    \item There exists a vertex $v$ such that $N_G(v)$ is strong vertex $\II$-contractible and yet $G-v$ is not in $\SVIC$.
    
    \end{enumerate}
Moreover any graph on 12 vertices and 35 edges with these properties is isomorphic to one of these.
\end{theorem}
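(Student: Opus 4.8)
The statement to be proved is essentially a classification/minimality result: the graphs in Figure~\ref{stuck} are the smallest graphs $G$ satisfying (1) $G \in \SVIC$ and (2) there is a $\SVIC$-contractible vertex $v$ with $G - v \notin \SVIC$; moreover, every 12-vertex, 35-edge graph with these properties is isomorphic to one of them.

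\textbf{Overall approach.} The proof is fundamentally computational, following the same template as Theorems~\ref{homologia-trivial-nsvic} and the algorithms already introduced. The plan is to (a) run an exhaustive search over all isomorphism types of connected graphs up to 12 vertices, using McKay's collection~\cite{McKay}, filtering first by trivial homology (Ripser~\cite{Bauer2021}) since membership in $\SVIC$ forces acyclicity by Corollary~\ref{thm:contractible}; (b) for each surviving candidate $G$, decide $G \in \SVIC$ via Algorithm~\ref{SIVC}; (c) for each $G \in \SVIC$, iterate over all vertices $v$, test whether $N_G(v) \in \SVIC$ (again Algorithm~\ref{SIVC}), and if so test whether $G - v \in \SVIC$; report any $G$ for which some such $v$ yields $G - v \notin \SVIC$. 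The search establishes that no graph on $\le 11$ vertices has the property, that the two graphs of Figure~\ref{stuck} do have it, and that these are the only ones on 12 vertices with 35 edges.

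\textbf{Key steps in order.} First I would verify the two exhibited graphs directly: for each, present an explicit vertex ordering realizing it as strong vertex $\II$-contractible (witnessing (1)), identify the critical vertex $v$ together with an ordering certifying $N_G(v) \in \SVIC$, and then argue $G - v \notin \SVIC$ — the last point is the delicate one and is best done by the algorithm, but can be made human-checkable by noting that in $G-v$ no vertex has a strong-vertex-$\II$-contractible open neighborhood (equivalently, after the forced initial deletions one reaches a graph all of whose vertices fail the neighborhood test), so the recursion in Algorithm~\ref{SIVC} returns \texttt{FALSE}. Second, I would record the scope of the exhaustive search (number of connected graphs per vertex count, number with trivial homology, number in $\SVIC$) and state that among all of these only the two pictured graphs satisfy (1)–(2) with $|V| \le 12$, and at 12 vertices one must additionally restrict to 35 edges for the classification to be tight; for $|V| \le 11$ the search returns nothing, which combined with the remark following Theorem~\ref{homologia-trivial-nsvic} (where $\SIC = \SVIC$ up to 10 vertices and the greedy property holds up to 11) explains why 12 is the threshold.

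\textbf{Main obstacle.} The genuine difficulty is not conceptual but is (i) establishing correctness of Algorithm~\ref{SIVC} — that it returns \texttt{TRUE} exactly on $\SVIC$ graphs — which follows by induction on $|V(G)|$ from Definition~\ref{SVIC} since $\SVIC$ is closed under neither arbitrary vertex deletion (that is precisely the point of this theorem!) but the recursion correctly searches over \emph{all} legal deletions rather than committing greedily; and (ii) trusting the exhaustive enumeration, i.e.\ that McKay's lists are complete and that the homology filter does not discard a relevant graph (safe, since $\SVIC \Rightarrow$ contractible $\Rightarrow$ acyclic). A secondary subtlety is making the non-membership $G - v \notin \SVIC$ independently checkable: here I would give, for each of the two graphs, the short certificate that after peeling off the unique forced sequence of $\SVIC$-contractible vertices in $G - v$ one reaches a graph on a small number of vertices (e.g.\ a 4- or 5-vertex graph that is a cycle or a non-cone configuration) with no $\SVIC$-contractible vertex at all, which kills the recursion.
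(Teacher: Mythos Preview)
Your approach is fundamentally different from the paper's, and in fact the brute-force plan you describe will not work as stated.

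\textbf{What the paper actually does.} The paper does \emph{not} enumerate $12$-vertex graphs at all; indeed the authors explicitly note elsewhere that their computational power is exhausted at $11$ vertices (there are on the order of $10^{11}$ connected graphs on $12$ vertices). The minimality argument is structural: if $G$ satisfies (1) and (2) with witness $v$, then $G-v$ is $\II$-contractible (it lies in the same $\II$-homotopy class as $G$, since $N_G(v)\in\SVIC\subset\II$) but $G-v\notin\SVIC$. By Theorem~\ref{homologia-trivial-nsvic} such a graph has at least $11$ vertices, so $G$ has at least $12$. The two examples are then built by hand by gluing a new vertex $12$ onto the specific $11$-vertex graph $G_{30}$ (the edge-minimal graph in Figure~\ref{Grupo12}); the resulting graphs are verified to lie in $\SVIC$ by exhibiting a single good deletion, and $G-12=G_{30}\notin\SVIC$ is already known. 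Edge-minimality and the uniqueness at $35$ edges come from Lemma~\ref{simetrias} on $\mathrm{Aut}(G_{30})$ together with Remark~\ref{ConstyMin}: every candidate $G$ with $12$ vertices has $G-v$ isomorphic to some graph in Figure~\ref{Grupo12}, and only $G_{30}$ has $30$ edges, so only the constructions from $G_{30}$ can achieve $35$.

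\textbf{Gaps in your proposal.} First, the exhaustive $12$-vertex search is infeasible, which is precisely why the paper routes the argument through Theorem~\ref{homologia-trivial-nsvic}. Second, you misread Algorithm~\ref{SIVC}: it is \emph{greedy}, not backtracking --- the \texttt{Return} inside the \texttt{for} loop commits to the first vertex with $\SVIC$-contractible neighborhood. Its correctness for deciding $\SVIC$ membership is justified only for graphs on at most $11$ vertices (where, by Theorem~\ref{homologia-trivial-nsvic} and the remark following it, order does not matter); on $12$-vertex graphs the algorithm can return \texttt{FALSE} on a graph that is in $\SVIC$, which is exactly the phenomenon Theorem~\ref{minima-de-12} exhibits. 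So you cannot use Algorithm~\ref{SIVC} as written to decide property (1) on the very graphs you are searching for. Third, your proposed certificate that $G-v\notin\SVIC$ (``peel down to a $4$- or $5$-vertex cycle'') is incorrect: here $G-v\cong G_{30}$ already has \emph{no} $\SVIC$-contractible vertex at all (if it had one, deleting it would yield a $10$-vertex $\II$-contractible graph, which by Theorem~\ref{homologia-trivial-nsvic} is in $\SVIC$, forcing $G_{30}\in\SVIC$), so the recursion halts immediately at $11$ vertices.
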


For the proof of Theorem \ref{minima-de-12} we will need an auxiliary lemma regarding the automorphism group of a related graph.

\begin{lemma}\label{simetrias}
The graph $G_{30}$ depicted in Figure \ref{G30} has automorphism group given by ${\mathbb Z}_2 \times {\mathbb Z}_2$.
\end{lemma}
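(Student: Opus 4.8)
The statement is purely combinatorial: we must identify all automorphisms of a specific finite graph $G_{30}$ (presumably on $12$ vertices with $30$ edges, given the labeling) and show the group is $\mathbb{Z}_2 \times \mathbb{Z}_2$. The plan is to exhibit two commuting involutions explicitly, then argue no further automorphisms exist by a degree-sequence / neighborhood-invariant argument. First I would record the degree sequence of $G_{30}$ and partition $V(G_{30})$ into the orbits of vertices sharing the same degree; any automorphism must preserve this partition, which already drastically cuts down the possibilities. If, as is typical for these hand-built examples, there are only a few vertices of each degree, the cells of the partition will be small (size $1$, $2$, or $4$), and an automorphism is then determined by where it sends a handful of vertices.

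Next I would name the two generating involutions. One expects $G_{30}$ to have been drawn with a visible left--right symmetry $\alpha$ and a visible top--bottom symmetry $\beta$ (the figure is surely laid out to make these apparent); I would write each as an explicit product of transpositions on the vertex labels and verify directly from the edge list that each preserves adjacency. Then check $\alpha\beta = \beta\alpha$ and that $\alpha, \beta, \alpha\beta$ are pairwise distinct and non-trivial, so that $\langle \alpha,\beta\rangle \cong \mathbb{Z}_2\times\mathbb{Z}_2 \le \operatorname{Aut}(G_{30})$.

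For the reverse inclusion I would fix a vertex $v$ of extreme (say, maximum or minimum) degree whose cell in the degree partition is as small as possible, and track how an arbitrary automorphism $\varphi$ can act on it. Using the refinement of the partition by "number of neighbors of each degree" — i.e. iterating the color-refinement / $1$-WL idea by hand — one pins down $\varphi$ on a generating set of vertices: once $\varphi$ is known on enough vertices that the rest are forced by adjacency, we get $|\operatorname{Aut}(G_{30})| \le 4$, hence equality. Concretely I would show that $\varphi$, composed with a suitable element of $\langle\alpha,\beta\rangle$, fixes some anchor vertex and its neighborhood pointwise, and then propagate: each remaining vertex is the unique common neighbor (or unique non-neighbor) of already-fixed vertices, forcing $\varphi$ to be the identity after correction.

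\textbf{Main obstacle.} The only real difficulty is bookkeeping: there is no conceptual hard step, but one must be careful that the degree-and-neighborhood invariants really do distinguish the vertices finely enough to rule out "hidden" symmetries not visible in the drawn picture — in particular an automorphism swapping two cells of equal size that the coarse degree partition cannot see. I would guard against this by carrying the color refinement one or two rounds past stabilization and by double-checking that the two exhibited involutions are not secretly equal or composable into something larger. Should the refinement fail to fully discretize, a short case analysis on the finitely many label-permutations consistent with the stable coloring (a computation small enough to do by hand or cite as a routine check, consistent with the paper's computational style) finishes the argument.
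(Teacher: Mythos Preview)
Your plan is essentially the same as the paper's proof: the paper partitions the vertices by the isomorphism type of their open neighborhoods (your color-refinement step), observes that this forces any automorphism to act as a rigid symmetry of a visible hexagonal subconfiguration (your ``left--right'' and ``top--bottom'' involutions), and then lists the four resulting permutations explicitly and checks they form $\mathbb{Z}_2\times\mathbb{Z}_2$. One small factual slip: $G_{30}$ has $11$ vertices (it is one of the $11$-vertex graphs from Figure~\ref{Grupo12}), not $12$; the ``$30$'' refers to its edge count.
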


\begin{proof}
We let $G_{30}$ denote the graph depicted in Figure \ref{G30}. Note that there are four isomorphism classes of open vertex neighborhoods, given by $\{1, 2,6,  7\}$, $\{4,5,8,9\}$, $\{3,10\}$ and $\{11\}$.  We partition the vertex set of $G_{30}$ accordingly. 
Any automorphism of $G_{30}$ must fix those sets and must fix rigidly the hexagon $\{1, 2, 3, 6, 7, 10, 11\}$ depicted in Figure \ref{G30}. Therefore the only possible symmetries are the vertical, horizontal and point reflections (rotation by 180 degrees). We then have that $\bigl(\begin{smallmatrix}
    1 & 2 & 3 & 4 & 5 & 6 & 7 & 8 & 9 & 10 & 11 \\
    1 & 2 & 3 & 4 & 5 & 6 & 7 & 8 & 9 & 10 & 11
  \end{smallmatrix}\bigr)$, $\bigl(\begin{smallmatrix}
    1 & 2 & 3 & 4 & 5 & 6 & 7 & 8 & 9 & 10 & 11 \\
    2 & 1 & 10 & 9 & 8 & 7 & 6 & 5 & 4 & 3 & 11
  \end{smallmatrix}\bigr)$, $\bigl(\begin{smallmatrix}
    1 & 2 & 3 & 4 & 5 & 6 & 7 & 8 & 9 & 10 & 11 \\
    6 & 7 & 3 & 5 & 4 & 1 & 2 & 9 & 8 & 10 & 11
  \end{smallmatrix}\bigr)$, $\bigl(\begin{smallmatrix}
    1 & 2 & 3 & 4 & 5 & 6 & 7 & 8 & 9 & 10 & 11 \\
    7 & 6 & 10 & 8 & 9 & 2 & 1 & 4 & 5 & 3 & 11
  \end{smallmatrix}\bigr)$ are the automorphisms of the graph $G_{30}$. One can check that this set forms a group isomorphic to ${\mathbb Z}_2 \times {\mathbb Z}_2$.
\end{proof}

\begin{figure}[htbp] 
    \centering
    \includegraphics[width=0.325\linewidth]{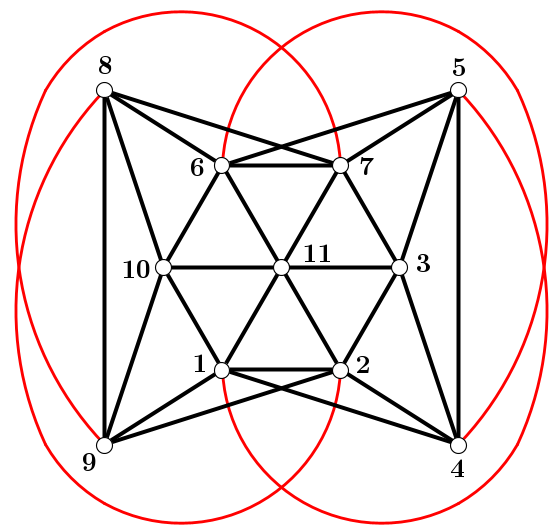}
    \caption{The graph $G_{30}$, the smallest graph (in terms of edges) among those depicted in Figure \ref{Grupo12}.}
    \label{G30}
\end{figure}

\begin{remark}\label{ConstyMin}
Before turning to the proof of Theorem \ref{minima-de-12}, we make some observations regarding the graphs in Figure \ref{stuck} and Figure \ref{G30}.

\begin{enumerate}
    \item Note that the edge $\{1,5\}$ in $G_{30}$ (see Figure \ref{G30}) is $\II$-contractible since its neighborhood is the single vertex $\{4\}$.  Recall from Lemma \ref{chenlemma} that one can realize its deletion as a vertex gluing followed by a vertex deletion.  The vertex gluing can be done in two ways (resulting in the graphs depicted in Figure \ref{stuck}), either by creating a neighbor of 5 (Figure \ref{stuckA}) or by creating a neighbor of 1 (Figure \ref{stuckB}).  Note that in the second case we also omit the edge $\{4, 12\}$.

%Both graphs are obtained from $G_{30}$ in the way of Lemma \ref{chenlemma} to delete edge $\{1,5\}$, graph \ref{stuckA} creating a neighbor of $5$, graph \ref{stuckB} creating a neighbor of $1$, in the first case exactly as it was explained in the original proof of \cite[Lemma 3.4]{Chen2001}, and in the second case we have omitted the edge $\{12,4\}$, because of it is unnecessary. 
    \item We can similarly construct new graphs from $G_{30}$ using instead any of the edges $\{1,5\}$, $\{2,8\}$, $\{6,4\}$ or $\{7,9\}$.  However, by the symmetry described in Lemma \ref{simetrias}, we will always obtain a graph isomorphic to one of the graphs depicted in Figure \ref{stuck}.
    
    \item\label{parte3} If we use this construction on any other graph from Figure \ref{Grupo12} we obtain a graph with more edges than those graphs depicted in Figure \ref{stuck}. 
\end{enumerate}
\end{remark}

\begin{proof}[Proof of Theorem \ref{minima-de-12}]
%Let $G$ denote the graph depicted in Figure \ref{stuckA}. First note that $N_G(12)$, the neighborhood of the vertex $12$, is a cone and hence is strong vertex $\II$-contractible.

We first claim that each graph depicted in  Figure \ref{stuck} is strong vertex $\II$-contractible.  For this note that deleting the $\SVIC$-contractible vertex $5$ (for graph \ref{stuckA}) and vertex $1$ (for graph \ref{stuckB}) results in a graph isomorphic to $G \backslash \{1,5\}$.  By Remark \ref{ConstyMin} we see that the resulting graph is $\II$-contractible, and Theorem \ref{homologia-trivial-nsvic} implies that it is in fact in $\SVIC$. 

For the next property, we let $v = 12$ for both graphs depicted in Figure \ref{stuck}.  Indeed note that in each case $v$ is $\SVIC$-contractible, since its neighborhood is a cone (the edges incident to vertex $12$ are indicated in blue).  If one deletes the vertex $12$ (in both cases) we obtain the graph $G_{30}$ depicted in Figure \ref{G30} (also appearing in the bottom left in Figure \ref{Grupo12}). We have seen in Theorem \ref{homologia-trivial-nsvic} that this graph is not strong vertex $\II$-contractible, and hence one cannot continue with vertex deletions. This establishes the second of the two properties.

For the minimality claims, first note that for a graph $G$ satisfying the desired properties we need a vertex $v$ such that $G - v$ is strong $\II$-contractible but not in $\SVIC$.  Hence by Theorem \ref{homologia-trivial-nsvic} such a graph $G$ must have at least $12$ vertices. Among graphs with $12$ vertices that satisfy the properties, Remark \ref{ConstyMin}.(\ref{parte3}) implies that the graphs depicted in Figure \ref{stuck} are minimal with respect to edges.

\end{proof}

%%%%%%%%%%%%%%%%%%%%%%%%%%%%%%%%%%%%%%%%%%%%%%%%%%%%%%
%%%%%%%%%%%%%%%%%%%%     Four SECTION     %%%%%%%%%%%%%%%%%%%%
%%%%%%%%%%%%%%%%%%%%%%%%%%%%%%%%%%%%%%%%%%%%%%%%%%%%%%%%%%%%%%%%

%\section{\mbox{$k$}-dismantlable, $k$-collapsible and Strong-vertices-$\II$-contractibles}
\section{\texorpdfstring{$k$}{k}-dismantlable graphs and  \texorpdfstring{$k$}{k}-collapsible complexes} \label{sec:dismantlable}

In this section we discuss how our constructions relate to the class of $k$-dismantlable graphs, as defined in Section \ref{sec:definitions}.   For the case of 0-dismantlable graphs it is known that the order of removing dismantlable vertices does not matter  \cite[Obs. 4.14]{FriasThesis}, and in fact the set of all dismantlings of a finite graph forms a \emph{greedoid} \cite{KorLov}. As discussed in Section \ref{sec:intro}, a similar fact holds for the class of strong collapsible simplicial complexes, see Theorem 2.11 from \cite{Barmak2011}.  

In Section \ref{sec:order} we saw that order \emph{does} in general matter for removing $\SVIC$-contractible vertices of strong vertex $\II$-contractible graphs.  As a special case of \cite[Proposition 5.1]{Bou2010} we observe the following.

\begin{lemma}\label{lem:dismantstrong}
If $G$ is a $0$-dismantlable graph then $G$ is strong vertex $\II$-contractible.
\end{lemma}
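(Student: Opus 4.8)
The plan is to prove this by induction on the number of vertices of $G$, directly unwinding the two definitions. Recall that a $0$-dismantlable vertex $v$ is one whose open neighborhood $N_G(v)$ is a cone, and a $0$-dismantlable graph is one reducible to $K(1)$ by successively deleting such vertices; on the other side, $G \in \SVIC$ means $G$ is built from $K(1)$ by vertex gluings along $\SVIC$-contractible subgraphs, equivalently $G$ can be reduced to $K(1)$ by successively deleting vertices whose open neighborhood lies in $\SVIC$. So the essential point to establish is that a $0$-dismantlable vertex is automatically $\SVIC$-contractible, and that $0$-dismantlability is inherited by the smaller graph obtained after a deletion.

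First I would record the base case: $K(1)$ is both $0$-dismantlable and in $\SVIC$ by definition. For the inductive step, suppose $G$ is $0$-dismantlable on $n \geq 2$ vertices, so there is a $0$-dismantlable vertex $v$ with $G - v$ again $0$-dismantlable (this is exactly what the definition of a $0$-dismantlable graph provides — a valid dismantling sequence exists, and one may take $v$ to be its first vertex, after which the remainder of the sequence witnesses that $G-v$ is $0$-dismantlable). By the inductive hypothesis $G - v \in \SVIC$. It then suffices to check that $v$ is $\SVIC$-contractible, i.e. that $N_G(v) \in \SVIC$: once we know that, gluing $v$ back to $G - v$ along $N_G(v)$ via move (I2) shows $G \in \SVIC$. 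But $N_G(v)$ is a cone by hypothesis, and the excerpt already observes (just after Definition \ref{SVIC}) that cones are strong vertex $\II$-contractible. Hence $N_G(v) \in \SVIC$, completing the induction.

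A small subtlety worth making explicit: the definition of $\SVIC$ is phrased in terms of building \emph{up} from $K(1)$ by gluings, whereas dismantling is phrased in terms of deleting \emph{down}. These are equivalent — a gluing sequence read in reverse is a deletion sequence and vice versa — since move (I2) glues a vertex $v$ with $N_{G''}(v) = G'$ where $G' \in \SVIC$ is an induced subgraph, and deleting that same $v$ from $G''$ returns to a graph with $v$'s old neighborhood $N_{G''}(v) = G' \in \SVIC$, so $v$ is $\SVIC$-contractible in $G''$. I would state this reversibility once at the start so the induction reads cleanly in the deletion direction, matching the dismantling picture.

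The argument is genuinely short, and there is no real obstacle — the only thing to be careful about is not conflating ``$N_G(v)$ is a cone'' (a cone as a \emph{graph}, with a vertex adjacent to all others) with ``$v$ is a cone vertex of $G$'' (adjacent to all of $G$); the former is the right hypothesis here and it is strictly weaker. The potential pitfall is purely bookkeeping: making sure the inductive hypothesis is applied to $G - v$ and that the cone observation is applied to $N_G(v)$, which has fewer than $n$ vertices so nothing circular occurs. Since the paper cites this as a special case of \cite[Proposition 5.1]{Bou2010}, one could alternatively just quote that reference, but the self-contained induction above is cleaner and uses only the cone observation already in the excerpt.
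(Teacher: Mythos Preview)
Your proposal is correct and follows essentially the same approach as the paper's own proof: induction on $|V(G)|$, using that $N_G(v)$ is a cone (hence in $\SVIC$) and that $G-v$ is $0$-dismantlable (hence in $\SVIC$ by induction). The paper's version is more terse and omits your discussion of the gluing/deletion reversibility, but the argument is the same.
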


\begin{proof}
We use induction on $n = |V(G)|$.  The statement is clearly true for $n = 1$.  For $n > 1$ suppose $v \in V(G)$ is $0$-dismantlable. Then by definition $N_G(v)$ is a cone, which is strong vertex $\II$-contractible. The deletion $G - v$ is by definition $0$-dismantlable, which by induction is also strong vertex $\II$-contractible.  We conclude that $G$ is in $\SVIC$.
\end{proof}

Hence it is of interest to find examples of graphs that are in $\SVIC$ but not $0$-dismantlable.  Our next result describes the smallest such graphs.

\begin{theorem}\label{thm:notdis}
The smallest graphs (in terms of vertices) that are  strong vertex $\II$-contractible but not $0$-dismantlable  have $8$ vertices, and are depicted in Figure \ref{fig:notdis}. Furthermore these graphs are all $1$-dismantlable.
\end{theorem}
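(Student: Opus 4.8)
\textbf{Proof strategy for Theorem \ref{thm:notdis}.}

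The plan is to split the argument into three parts: a computer search establishing that no graph on at most $7$ vertices has the stated property, a verification that the specific graphs in Figure \ref{fig:notdis} are strong vertex $\II$-contractible but not $0$-dismantlable, and a check that each of them is $1$-dismantlable. For the lower bound, I would proceed exactly as in the proof of Theorem \ref{homologia-trivial-nsvic}: take McKay's list \cite{McKay} of all isomorphism types of connected graphs on at most $7$ vertices, run Algorithm \ref{SIVC} to flag those in $\SVIC$, and among those run the (dual) recursive test for $0$-dismantlability (repeatedly strip vertices whose open neighborhood is a cone). One then reports that every graph in $\SVIC$ on at most $7$ vertices is in fact $0$-dismantlable, so that $8$ vertices is forced; and that on $8$ vertices precisely the graphs in Figure \ref{fig:notdis} arise. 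Since $\SVIC$ membership is closed under the recursion on neighborhoods and deletions, and these routines were already used and described above, this part is a bookkeeping extension of the earlier search.

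For the positive direction, I would exhibit for each graph in Figure \ref{fig:notdis} an explicit ordering of vertex deletions witnessing membership in $\SVIC$: at each step name the deleted vertex $v$, observe that $N_G(v)$ is a cone (or, more generally, lies in $\SVIC$ by a previously verified sub-case), and that the residual graph after all deletions is $K(1)$. Because neighborhoods here are small, each ``$N_G(v) \in \SVIC$'' check reduces to noticing a cone point, which keeps the verification short. To show these graphs are \emph{not} $0$-dismantlable, I would argue that in the starting graph \emph{no} vertex has a neighborhood that is a cone — equivalently, for every vertex $v$, the induced subgraph $N_G(v)$ has no vertex adjacent to all the others — so the $0$-dismantling process cannot even begin. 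This is a finite case check over $8$ vertices and can be stated compactly (e.g.\ each open neighborhood is a $4$-cycle, or otherwise cone-free).

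For the final claim, $1$-dismantlability, I would produce for each graph an ordering $v_1, \dots, v_8$ such that, at the $i$th step, the open neighborhood $N(v_i)$ in the current graph is itself $0$-dismantlable — which, given how small these neighborhoods are, amounts to noting that each such neighborhood (a $4$-cycle, a path, a triangle, etc.) can be reduced to a point by stripping cone-neighborhood vertices. Since a $4$-cycle $C_4$ is $1$-dismantlable but not $0$-dismantlable, and the obstruction to $0$-dismantlability in our graphs is exactly that neighborhoods look like $C_4$, the same ordering that fails for $0$-dismantling succeeds for $1$-dismantling. I expect the main obstacle to be purely expository rather than mathematical: presenting the exhaustive $8$-vertex classification (the ``precisely these graphs'' clause) in a way that is convincing without reproducing the search, and choosing vertex labels in Figure \ref{fig:notdis} so that all three deletion orders (the $\SVIC$ order, the failed $0$-dismantling, and the successful $1$-dismantling) can be written down cleanly. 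I would also remark, as the authors note, that these graphs coincide with examples already appearing in \cite{Bou2010} and \cite{Fieux2020}, which provides an independent check of the $1$-dismantlable-but-not-$0$-dismantlable property.
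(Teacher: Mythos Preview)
Your overall strategy matches the paper's: an exhaustive computer search through McKay's catalogue, followed by direct verification for the graphs in Figure~\ref{fig:notdis}. The paper streamlines one step you propose by invoking Theorem~\ref{homologia-trivial-nsvic}: since every acyclic graph on at most ten vertices already lies in $\SVIC$, it suffices to filter the $\leq 8$-vertex graphs by acyclicity (via a homology computation) and then test $0$-dismantlability, rather than running Algorithm~\ref{SIVC} separately. Your explicit-ordering approach for $\SVIC$ membership would of course also work.

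There is, however, one concrete error. You assert that $C_4$ is $1$-dismantlable but not $0$-dismantlable, and build the $1$-dismantling argument on the expectation that open neighborhoods in these graphs look like $C_4$. In fact $C_4$ is not $k$-dismantlable for any $k$: every open neighborhood in $C_4$ is a pair of nonadjacent vertices, which is disconnected and hence not $(k-1)$-dismantlable for any $k$ (equivalently, $\Delta(C_4)\simeq S^1$ is not contractible). If the neighborhoods really were $4$-cycles, your $1$-dismantling step would fail outright. What actually happens in the graphs of Figure~\ref{fig:notdis} is that each has a vertex whose open neighborhood is the path $P_4$, which \emph{is} $0$-dismantlable; deleting that vertex leaves a $0$-dismantlable graph, and this is how the paper certifies $1$-dismantlability. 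So your scheme is salvageable, but with $P_4$ in place of $C_4$.
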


\begin{proof}
Again we verify this by explicitly checking all graphs with $8$ or fewer vertices.  We first consider all graphs that are acyclic and among those check which are $0$-dismantable using script available at \cite{gcs}.  Recall from Theorem \ref{homologia-trivial-nsvic} that any acyclic graph on $8$ or fewer vertices is in $\SVIC$. An explicit computation also shows that each graph is $1$-dismantlable. In particular every graph in Figure \ref{fig:notdis} has a vertex (depicted in bold) whose neighborhood is the path $P_4$ on $4$ vertices (which itself is $0$-dismantlable). One can check that the removal of any such vertex results in a $0$-dismantlable graph.
\end{proof}

We note that two of these graphs have appeared previously in the literature.  In particular the graph with $17$ edges depicted on the bottom left in Figure \ref{fig:notdis} was described in \cite{Bou2010}, and the graph with $18$ edges just above it was used in \cite{Fieux2020}, in both cases as examples of 
a graph that is $1$-dismantlable but not $0$-dismantlable.  We thank the referee for pointing this out to us.

%It can be checked that the other graphs described in Theorem \ref{thm:notdis} are also $1$-dismantlable. We thank the anonymous referee for pointing this out to us.

\begin{figure}[htbp]
    \centering
    \includegraphics[width = 0.825\textwidth]{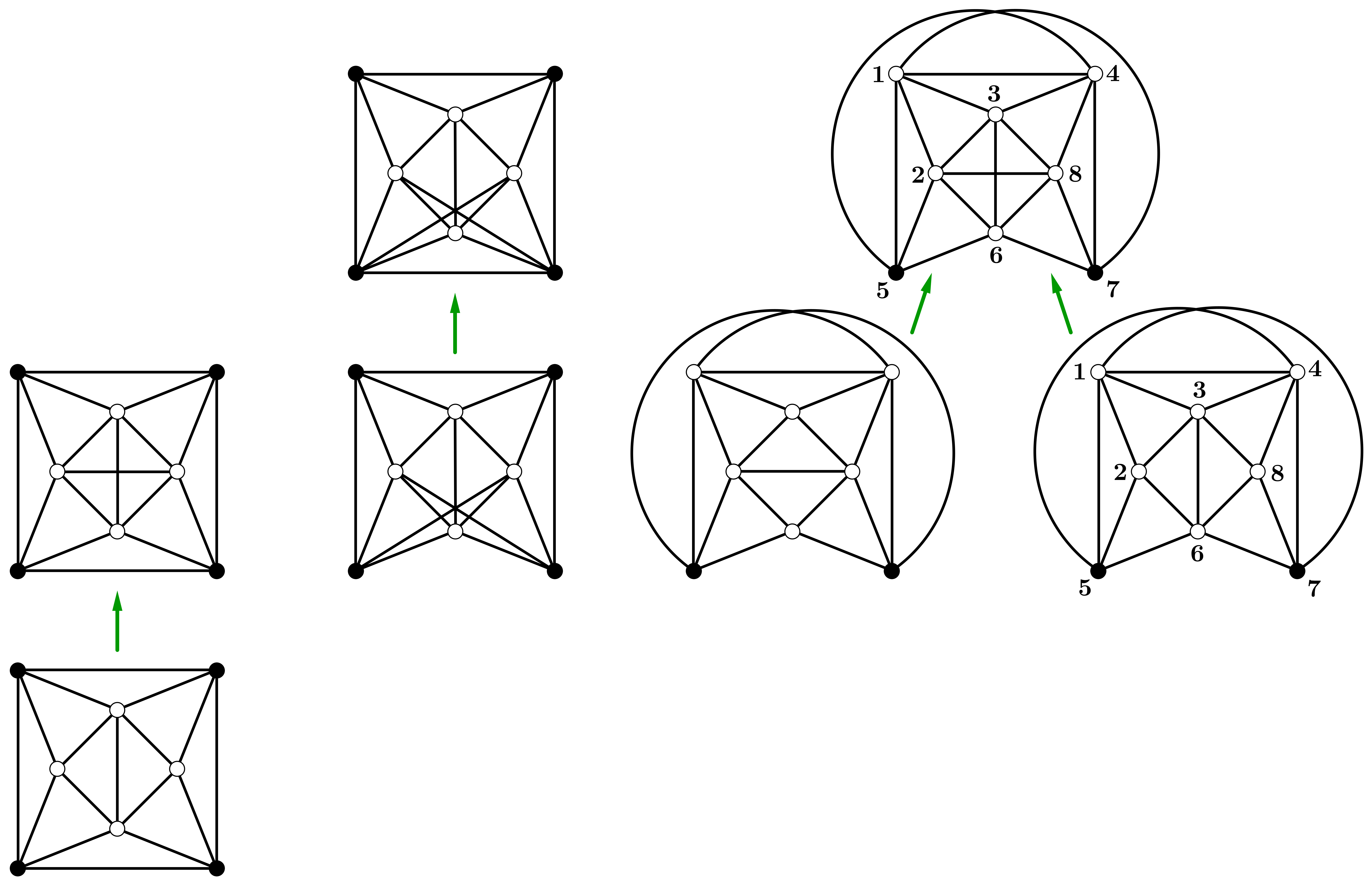}
    \caption{The smallest graphs that are $1$-dismantlable (and hence strong vertex $\II$-contractible) but not $0$-dismantlable. The vertices depicted in bold are $1$-dismantlable, and a green arrow indicates the addition of an $\II$-contractible edge. The two graphs with integer-labeled vertices are the smallest graphs that contradict Statement \ref{Iva_axiom}.}
    \label{fig:notdis}
\end{figure}

Recall from Section \ref{sec:definitions} that the notion of a $k$-collapsible vertex of a simplicial complex was defined by Barmak and Minian in \cite{Barmak2011}.  The definition is very similar to that of a $k$-dismantlable vertex of a graph, and in fact the two concepts coincide for flag simplicial complexes.

\begin{lemma}\cite[Proposition 4]{Fieux2020}
For a graph $G$, a vertex $v \in G$ is $k$-dismantlable if and only if $v$ is $k$-collapsible in the clique complex $\Delta(G)$.
\end{lemma}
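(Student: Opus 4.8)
The plan is to prove both directions by simultaneous induction on $k$, and within each step to reduce to understanding the link of a vertex in a flag complex. The key structural fact I would establish first, as a preliminary observation, is that if $\Delta = \Delta(G)$ is the clique complex of a graph $G$ and $v \in G$, then the link $\mathrm{link}_\Delta(v)$ is itself a flag complex, and in fact $\mathrm{link}_{\Delta(G)}(v) = \Delta(N_G(v))$. This is immediate from the definition: a set $\tau$ of vertices disjoint from $v$ satisfies $\{v\}\cup\tau \in \Delta(G)$ iff $\{v\}\cup\tau$ is a clique of $G$ iff $\tau$ is a clique contained in $N_G(v)$ iff $\tau \in \Delta(N_G(v))$. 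So the operation ``pass to the open neighborhood'' on the graph side corresponds exactly to ``pass to the link'' on the complex side, and this correspondence preserves flagness.

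Next I would handle the base case $k=0$, which is where the two definitions must be matched by hand rather than by the inductive engine. On the complex side, $v$ is $0$-collapsible means $\mathrm{link}_\Delta(v)$ is a simplicial cone, i.e. there is a vertex $v'$ in the link such that every simplex of the link has the form $\{v'\}\cup\tau$ with $\tau$ in the link. On the graph side, $v$ is $0$-dismantlable means $N_G(v)$ is a cone, i.e. there is a vertex $v' \in N_G(v)$ adjacent (in $G$, equivalently in $N_G(v)$) to every other vertex of $N_G(v)$. I would show these are equivalent using the preliminary observation: $\mathrm{link}_\Delta(v) = \Delta(N_G(v))$ is a simplicial cone with apex $v'$ iff $v'$ together with any clique of $N_G(v)-v'$ is again a clique, which (since $\Delta(N_G(v))$ is flag, so cliqueness is checked on edges) holds iff $v'$ is adjacent to every vertex of $N_G(v)$, i.e. iff $N_G(v)$ is a cone with cone point $v'$. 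Here the flagness of the link is essential: for a general complex being a cone is not an edge-local condition, but for a clique complex it is.

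Finally I would run the inductive step. Suppose the equivalence ``$k'$-dismantlable in $H$ $\iff$ $k'$-collapsible in $\Delta(H)$'' holds for all graphs $H$ and all $k' < k$. A vertex $v \in G$ is $k$-dismantlable iff $N_G(v)$ is $(k-1)$-dismantlable \emph{as a graph}, which by the inductive hypothesis (applied to the graph $N_G(v)$) holds iff $\Delta(N_G(v))$ is $(k-1)$-collapsible \emph{as a complex}; and $\Delta(N_G(v)) = \mathrm{link}_{\Delta(G)}(v)$ by the preliminary observation, so this says exactly that $\mathrm{link}_{\Delta(G)}(v)$ is $(k-1)$-collapsible, i.e. that $v$ is $k$-collapsible in $\Delta(G)$. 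I would be slightly careful that the inductive hypothesis is applied to links, which are again flag complexes (clique complexes of graphs), so the induction stays inside the class where the statement is being proved; this is exactly what the preliminary observation guarantees. The main obstacle, and really the only nonformal content, is the base case $k=0$: one must verify that being a simplicial cone and being a graph-theoretic cone match up, and this is precisely where flagness is used — outside the flag setting these definitions genuinely diverge, which is why the lemma is stated for clique complexes. (Note the lemma as stated only concerns \emph{vertices}; if one also wanted the global statement ``$G$ is $k$-dismantlable iff $\Delta(G)$ is $k$-collapsible'' one would additionally observe that deleting a vertex from $G$ corresponds to deleting it from $\Delta(G)$ and iterate, but that is not needed here.)
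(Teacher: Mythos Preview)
Your argument is correct. The paper does not actually supply a proof of this Lemma (it is simply cited from Fieux--Jouve); the only proof given nearby is the one-line justification of the subsequent Proposition, which invokes exactly the identity you isolate as the key structural fact, namely $\mathrm{link}_{\Delta(G)}(v) = \Delta(N_G(v))$. Your proposal unfolds that identity into a full induction on $k$ with an explicit base case, which is precisely what is needed and is in the same spirit as the paper's treatment.
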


From this we get the following characterization of $k$-dismantlable graphs.

\begin{proposition}\cite[Proposition 4]{Fieux2020} \label{kd-kc}
A graph $G$ is $k$-dismantlable if and only if $\Delta(G)$ is $k$-collapsible.
\end{proposition}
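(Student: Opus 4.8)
The plan is to deduce the statement from the vertex–level equivalence recorded in the preceding Lemma (\cite[Proposition 4]{Fieux2020}) by an induction on the number of vertices $n = |V(G)|$. The bridge between the two recursive definitions is the observation that deleting a vertex from a graph corresponds to the strong-collapse deletion of that vertex in the clique complex: for any $v \in V(G)$,
\[
\mathrm{del}_{\Delta(G)}(v) = \Delta(G - v) \qquad \text{and} \qquad \mathrm{link}_{\Delta(G)}(v) = \Delta(N_G(v)),
\]
since a subset of $V(G)\setminus\{v\}$ is a clique of $G-v$ precisely when it is a face of $\Delta(G)$ not containing $v$, and $\tau\cup\{v\}$ is a clique of $G$ precisely when $\tau$ is a clique contained in $N_G(v)$. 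In particular $\Delta(G-v)$ is again a flag complex (it is the full subcomplex of $\Delta(G)$ on $V(G)\setminus\{v\}$), so every complex occurring in a collapsing sequence that starts at $\Delta(G)$ is itself the clique complex of an induced subgraph of $G$; this is what makes the induction on $n$ go through.

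First I would dispatch the base case $n = 1$: here $G = K(1)$ is $k$-dismantlable for every $k \ge 0$, and $\Delta(G)$ is a single $0$-simplex, hence $k$-collapsible by definition. For the inductive step, assume $n > 1$ and that the proposition holds for all graphs with fewer than $n$ vertices. If $G$ is $k$-dismantlable, there is a $k$-dismantlable vertex $v$ with $G-v$ also $k$-dismantlable; the preceding Lemma makes $v$ a $k$-collapsible vertex of $\Delta(G)$, and the inductive hypothesis makes $\mathrm{del}_{\Delta(G)}(v) = \Delta(G-v)$ a $k$-collapsible complex, so deleting $v$ and then running a witnessing sequence for $\Delta(G-v)$ exhibits $\Delta(G)$ as $k$-collapsible. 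Conversely, if $\Delta(G)$ is $k$-collapsible, it admits a $k$-collapsible vertex $v$ with $\mathrm{del}_{\Delta(G)}(v)$ $k$-collapsible; since $\mathrm{del}_{\Delta(G)}(v) = \Delta(G-v)$ is the clique complex of the induced subgraph $G-v$, the inductive hypothesis gives that $G-v$ is $k$-dismantlable, while the Lemma gives that $v$ is $k$-dismantlable in $G$, so $G$ is $k$-dismantlable.

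The proof is really just a careful unwinding of the two recursions, so I do not expect a serious obstacle; the one point that needs attention — and the only place where it matters that $\Delta(G)$ is a clique complex rather than an arbitrary simplicial complex — is verifying that every intermediate complex along a collapsing sequence of $\Delta(G)$ is again the clique complex of an induced subgraph of $G$, so that the inductive hypothesis is legitimately applicable at each stage. This is precisely what closure of flagness under taking induced subcomplexes guarantees.
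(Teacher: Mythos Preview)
Your proof is correct and takes essentially the same approach as the paper. The paper's proof is a one-liner citing only the identity $\mathrm{link}_{\Delta(G)}(v) = \Delta(N_G(v))$; your version makes explicit the induction on $|V(G)|$ and the companion identity $\mathrm{del}_{\Delta(G)}(v) = \Delta(G-v)$ that the paper leaves implicit.
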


\begin{proof}
This follows from the fact that for any vertex $v \in G$ we have  $\mathrm{link}_{\Delta(G)}(v) = \Delta(N_G(v))$.
\end{proof}

Recall from \cite{Barmak2011} that a simplicial complex is non-evasive if and only if it is $k$-collapsible for some $k$. Also described in \cite{Barmak2011}, there exists simplicial complexes that are non-evasive but not $0$-collapsible.  Our Theorem \ref{thm:notdis} provides a minimal example among flag complexes.  Our next result relates these notions to the class $\SVIC$.

%\begin{proof}
%Induction on $k$. For $k=0$, $v\in V(G)$ is $0$-dismantlable if and only if $N(v)$ is a cone into $G$, if and only if, $\Delta(N(v))$ is a cone into $\Delta(G)$. \\
%Suppose that the statement is valid for $k$. If $v\in V(G)$ is $(k+1)$-dismantlable into $G$, then $N(v)$ is $k$-dismantlable into $G$. Equivalentely $\Delta(N(v))$ is $k$-collapsible into $\Delta(G)$, if and only if, $v$ is $(k+1)$-collapsible into $\Delta(G)$.
%\end{proof}

\begin{theorem}\label{sicne}
%We have the following equality of sets
%\[\{\text{Nonevasive graphs}\} = \bigcup_k\{G:\text{$G$ is $k$-collapsible}\} = \bigcup_k\{G:\text{$G$ is $k$-dismantlable}\}= \{\text{SVIC graphs}\}.\]
A graph $G$ is strong vertex $\II$-contractible if and only if $G$ is $k$-dismantlable for some $k$.
\end{theorem}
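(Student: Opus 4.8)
The plan is to prove the two implications separately, each by induction on $n=|V(G)|$, with the base case $n=1$ being immediate since $K(1)$ lies in $\SVIC$ and is $0$-dismantlable. Before starting I would record one auxiliary fact about dismantlability: \emph{if $H$ is $j$-dismantlable then $H$ is $k$-dismantlable for every $k\ge j$} (``monotonicity''). This follows by induction on $|V(H)|$: if $v$ is a $j$-dismantlable vertex with $H-v$ again $j$-dismantlable, then $N_H(v)$ is either a cone (when $j=0$, and cones are readily seen to be $0$-dismantlable) or $(j-1)$-dismantlable (when $j\ge 1$), hence $(k-1)$-dismantlable by the inductive hypothesis applied to the strictly smaller graph $N_H(v)$; thus $v$ is $k$-dismantlable in $H$, while $H-v$ is $k$-dismantlable by induction, so $H$ is $k$-dismantlable. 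The upshot is that a graph is $k$-dismantlable for \emph{some} $k$ exactly when it admits a dismantling order that is valid for one fixed, sufficiently large $k$.

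For the direction ``$k$-dismantlable $\Rightarrow \SVIC$'', assume $G$ is $k$-dismantlable with $n>1$ and choose a $k$-dismantlable vertex $v$ with $G-v$ again $k$-dismantlable. By the inductive hypothesis $G-v\in\SVIC$. The neighborhood $N_G(v)$ is a proper induced subgraph of $G$, so it has fewer than $n$ vertices; since it is $(k-1)$-dismantlable (or a cone, when $k=0$), the inductive hypothesis — together with Lemma \ref{lem:dismantstrong} and the observation that cones lie in $\SVIC$ — gives $N_G(v)\in\SVIC$. As $G$ is obtained from $G-v\in\SVIC$ by gluing the vertex $v$ along the subgraph $N_G(v)\in\SVIC$ (rule (I2) of Definition \ref{SVIC}), we conclude $G\in\SVIC$.

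For the converse ``$\SVIC\Rightarrow k$-dismantlable for some $k$'', assume $G\in\SVIC$ with $n>1$. Unwinding Definition \ref{SVIC}, $G$ is obtained from some graph $H\in\SVIC$ by gluing a vertex $v$ whose neighborhood $N_G(v)$ is a subgraph in $\SVIC$; that is, there is a vertex $v$ with $G-v=H\in\SVIC$ and $N_G(v)\in\SVIC$. By the inductive hypothesis, $N_G(v)$ is $k_1$-dismantlable for some $k_1$ and $G-v$ is $k_2$-dismantlable for some $k_2$. Put $k=\max(k_1+1,\,k_2)$. By the monotonicity lemma, $N_G(v)$ is $(k-1)$-dismantlable, so $v$ is a $k$-dismantlable vertex of $G$; and $G-v$ is $k$-dismantlable. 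Deleting $v$ first and then following a $k$-dismantling order of $G-v$ reduces $G$ to a single vertex using only $k$-dismantlable vertices, so $G$ is $k$-dismantlable.

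The only genuine subtlety — and the reason the monotonicity lemma is needed — is that $k$-dismantlability fixes one value of $k$ for the entire deletion sequence, whereas the two recursive pieces $N_G(v)$ and $G-v$ may a priori require different values; monotonicity lets us push both up to a common $k$. The remaining content is routine, the key structural point being that $N_G(v)$ always has strictly fewer vertices than $G$, so both inductions are well-founded. I would close by noting the advertised consequences: combining this theorem with Proposition \ref{kd-kc} and the theorem of Barmak--Minian that a simplicial complex is non-evasive if and only if it is $k$-collapsible for some $k$, one obtains that $G\in\SVIC$ if and only if $\Delta(G)$ is non-evasive, so that $\SVIC$ coincides with the class of non-evasive graphs of \cite{Fieux2020}.
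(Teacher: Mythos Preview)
Your argument is correct and follows essentially the same approach as the paper. The only cosmetic differences are that the paper proves the implication ``$k$-dismantlable $\Rightarrow\SVIC$'' by induction on $k$ (with base case $k=0$ handled by Lemma~\ref{lem:dismantstrong}) rather than on $|V(G)|$, and that it cites the monotonicity fact from \cite[Proposition~2]{Fieux2020} instead of proving it; in the converse direction the paper takes $k=1+\max\{\ell,\ell'\}$ where you take $k=\max(k_1+1,k_2)$, which is an immaterial difference.
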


\begin{proof}
Suppose $G$ is a $k$-dismantlable graph. We will prove by induction on $k$ that $G\in\SVIC$. For $k=0$, we have from Lemma \ref{lem:dismantstrong} that $G$ is in $\SVIC$. Suppose the result is true for $k$ and suppose that $G$ is $(k + 1)$-dismantlable. By assumption we can reduce $G$ to a single vertex by deleting $k$-dismantlable vertices, that is, vertices $v$ whose open neighborhood $N(v)$ is a $k$-dismantlable graph.  Then by induction we have that each $N(v) \in \SVIC$, and thus $G \in \SVIC$.

Now suppose that $G \in \SVIC$.  We prove that $G$ is $k$-dismantlable, for some $k$, by induction $n = |V(G)|$. If $n = 1$ the claim is clear so we assume $n > 1$.  By assumption we have some vertex $v \in G$ such that $N_G(v) \in \SVIC$.  Hence by induction we have that $v$ is $\ell$-dismantlable for some $\ell$.  Also, since $G - v$ is in $\SVIC$ we have that $G-v$ is $\ell^\prime$-dismantlable for some $\ell^\prime$. Recall that if $G$ is $m$-dismantlable then $G$ is $m^\prime$-dismantlable for all $m^\prime \geq m$ (see \cite[Proposition 2]{Fieux2020}). Hence if we let $k = 1+\max\{\ell, \ell^\prime\}$, we conclude that $G$ is $k$-dismantlable.
%Now suppose that $G\in\SVIC$ and the maximal complete subgraph of $G$ has $\ell$ vertices. We will prove by induction that $G$ is $k$-dismantlable for some $k$. If $\ell=1$, then $G$ is a single vertex and thus is $0$-dismantlable.  Suppose the claim is true for $\ell$, that is, if $G\in\SVIC$ and the maximal complete subgraph has $\ell$ vertices, then $G$ is $k-dismantlable$ for some $k$.
%Now suppose that $G\in\SVIC$ and that its maximal complete subgraph has $\ell+1$ vertices. Since $G\in\SVIC$, there is a vertex $v$ such that $N_G(v)\in\SVIC$.  The complete subgraphs of $N_G(v)$ have at most $\ell$ vertices, so by induction we have that $N_G(v)$ is $k$-dismantlable  for some $k$. $G$ is $(k+1)$-dismantlable for some $k$.   
\end{proof}

\begin{remark} 
With Theorem \ref{sicne} we can reformulate Theorems \ref{homologia-trivial-nsvic} and \ref{minima-de-12} in terms of $k$-dismantlability as follows.

\begin{itemize} 

\item In Figure \ref{Grupo12} we have the smallest graph $G$ (in terms of vertices) that has the properties: 

\begin{enumerate} 

    \item Its clique complex $\Delta(G)$ has trivial homology; 

    \item $G$ is not $k$-dismantlable for any $k$ (and hence $G$ is not non-evasive). 

    \end{enumerate}

\item In Figure \ref{stuck} we have the smallest graph $G$ (in terms of vertices) that has the properties: 

\begin{enumerate} 

    \item $G$ is $j$-dismantlable for some $j$; 

    \item There exists a vertex $v \in G$ such that $N_G(v)$ is $k$-dismantlable for some $k$, but where $G-v$ is not $\ell$-dismantlable for any $\ell$. 

    \end{enumerate}

\noindent
In \cite{Fieux2020} the authors also construct a graph $P$ on 16 vertices containing a vertex $v$ such that $N_P(v)$ is $0$-dismantlable, and such that $P$ is 1-dismantlable and yet $P-v$ (which they call the `parasol graph') is not $\ell$-dismantlable for any $\ell$.

\end{itemize} 

\end{remark}

Our examples from Theorem \ref{thm:notdis} also relate to an erroneous ``axiom'' from \cite{Ivashchenko1994a} regarding $\II$-contractible graphs.  We recall the statement here.

\begin{axiom} \cite[Axiom 3.4]{Ivashchenko1994a} \label{Iva_axiom}
Suppose  that  $G$ is  an  $\II$-contractible  graph,  and  let  $v \in V(G)$  be a vertex that is not  adjacent to  some vertex  of  $G$.  Then  there  exists a vertex $u$, not adjacent to $v$, such  that  the subgraph  $N_G(v,u)$  is  $\II$-contractible.
\end{axiom}

Ivashchenko uses Statement \ref{Iva_axiom} to establish other false properties of $\II$-contractible graphs, for example that any $\II$-contractible graph can be obtained from an isolated vertex by only allowing contractible gluings of vertices.
%For instance he makes the claim (\cite[Theorem 3.8]{Ivashchenko1994a}) that any $\II$-contractible graph can be obtained from an isolated vertex by only allowing contractible gluings of vertices, that is that all $\II$-contractible graphs are strong vertex $\II$-contractible. As we have seen, a counterexample to this last statement was described in \cite{Chen2001}.
%In \cite{Ivashchenko1994a} Ivashchenko claims that he had verified the axiom for all `small graphs'.  
In \cite{FriasArmenta2020} a first counterexample to Statement \ref{Iva_axiom} was constructed on 13 vertices and a smaller construction on 11 vertices was found by Ghosh and Ghosh in \cite{Ghosh2021}. Here the authors also challenged the reader to find yet a smaller construction, and we have the following answer to their question.
%\begin{figure}[htbp] 
%\begin{center}
%\includegraphics[scale=0.105]{V8_CE.png}
%\caption{The smallest graphs that contradict Statement \ref{Iva_axiom}. Again the red edges are $\II$-contractible and the green edges depict a $1$-skeleton of a tetrahedron.
%}\label{figureI8a}
%\end{center}
%\end{figure}
%
\begin{proposition}\label{thm:axiom}
The two graphs with integer labeled vertices depicted in Figure \ref{fig:notdis} (both on 8 vertices) contradict Axiom 3.4 from \cite{Ivashchenko1994a}, and are the smallest graphs (in terms of vertices) with this property. There are 133 graphs on $9$ vertices that contradict the Axiom.
\end{proposition}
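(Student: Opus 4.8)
The plan is to establish Proposition~\ref{thm:axiom} by a finite exhaustive computation, mirroring the verification scheme already used for Theorem~\ref{homologia-trivial-nsvic} and Theorem~\ref{thm:notdis}, together with a hand-check that the two highlighted $8$-vertex graphs actually violate Statement~\ref{Iva_axiom}. First I would verify the positive claim: for each of the two integer-labeled graphs in Figure~\ref{fig:notdis}, note that each is $1$-dismantlable (as recorded in Theorem~\ref{thm:notdis}) and hence in $\SVIC$, hence $\II$-contractible by the containments in~\ref{eq:contain}. Then one exhibits a non-cone vertex $v$ and checks, for every vertex $u$ not adjacent to $v$, that $N_G(v,u)$ is \emph{not} $\II$-contractible --- here one can use the fact (Corollary~\ref{thm:contractible}) that a graph with nontrivial homology cannot be $\II$-contractible, or for the small neighborhoods in question simply inspect that $N_G(v,u)$ is disconnected or is a cycle $C_4$, neither of which is $\II$-contractible. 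This is a short finite case analysis over at most a handful of pairs $(v,u)$ and should be displayed explicitly in the proof.

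Next comes the minimality claim, which is where the real work (though not the real difficulty) lies. Using McKay's catalogue \cite{McKay} of all isomorphism types of graphs on at most $9$ vertices, I would restrict to those graphs that are $\II$-contractible: by Theorem~\ref{homologia-trivial-nsvic} every acyclic graph on at most ten vertices is in $\SVIC$, hence in $\II$, so the relevant graphs are exactly those with trivial homology, which can be filtered with the Ripser-based code \cite{Bauer2021}. For each such graph $G$ one runs Algorithm~\ref{CheckAxiom} (\texttt{CheckAxiom}), which enumerates the non-cone vertices $v$ and, for each, searches for a non-neighbor $u$ with $N_G(v,u)\in\SIC$ using Algorithm~\ref{SIC}; note that testing membership in $\SIC$ suffices as a \emph{sufficient} certificate for $\II$-contractibility of the neighborhood, and for these tiny neighborhoods $\SIC$-contractibility and $\II$-contractibility coincide. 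Running this over all graphs on $\le 7$ vertices returns no counterexample, over $8$ vertices returns exactly the two graphs in Figure~\ref{fig:notdis} (up to isomorphism), and over $9$ vertices returns exactly $133$ graphs.

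The main obstacle is not mathematical but one of presentation and confidence in the computation: one must be careful that \texttt{CheckAxiom} faithfully encodes Statement~\ref{Iva_axiom}, in particular that it quantifies over \emph{non-cone} vertices $v$ (the loop condition \texttt{grade}\,$<n$) and over non-adjacent $u$ ($G_{VNC_i,VNC_j}=0$), and that a \texttt{FALSE} output of \texttt{SIcontractible.graph} does not spuriously certify a counterexample --- since $\SIC\subsetneq\II$ in general, a negative $\SIC$-test must be upgraded to a negative $\II$-test. For the specific graphs arising here this upgrade is immediate because the offending common neighborhoods are disconnected or are $4$-cycles, each having nonvanishing $\widetilde{H}_*$, so Corollary~\ref{thm:contractible} rules out $\II$-contractibility; I would state this explicitly rather than rely on the code alone. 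I would close the proof by remarking that this also recovers, as special cases, the $11$-vertex counterexample of Ghosh--Ghosh \cite{Ghosh2021} and the $13$-vertex counterexample of \cite{FriasArmenta2020}, now subsumed by the minimal $8$-vertex examples.
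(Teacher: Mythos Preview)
Your proposal is correct and follows essentially the same route as the paper: an exhaustive computer search over McKay's catalogue combined with an explicit hand-check of the two $8$-vertex graphs (the paper uses $v=6$, with non-neighbors $1$ and $4$, and observes $N(6,1)$ and $N(6,4)$ are disconnected). The one notable difference is that you take extra care with the gap between a negative $\SIC$-test and a negative $\II$-test, resolving it by inspecting homology of the offending common neighborhoods; the paper instead leans on Theorem~\ref{homologia-trivial-nsvic}, which guarantees $\SIC=\II$ for graphs on at most ten vertices (and every $N_G(v,u)$ here has at most seven), so Algorithm~\ref{SIC} is already exact in this range and no separate upgrade is needed.
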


\begin{proof}
To obtain the graphs depicted in Figure \ref{fig:notdis}, we first considered all isomorphism types of connected graphs on at most $9$ vertices (again using \cite{McKay}). We then applied Algorithm \ref{SIC} to determine which of these graphs belong to the family $\SIC$ (note that by Theorem \ref{homologia-trivial-nsvic} this is equivalent to checking containment in $\II$). We then applied Algorithm \ref{CheckAxiom} to determine which graphs fail Ivashchenko's axiom.

We can also explicitly check that each graph  contradicts the Statement \ref{Iva_axiom}.  For this note that in both graphs the vertex $6$ is not adjacent to vertex $1$, and yet there does not exist $u \in G$ that is not adjacent to $6$ and such that $N(6,u)$ is contractible. Indeed, in each graph the non-neighbors of vertex $6$ are $1$ and $4$, and yet 
$N(6,1)=\{7,3,2,5\}$, $N(6,4)=\{7,8,3,5\}$, which are both disconnected and hence not $\II$-contractible. 

Using our software we verified that there are no graphs on 7 vertices or less that are in $\SIC$ and which contradict Statement \ref{Iva_axiom}, and hence the two labeled graphs in Figure \ref{fig:notdis} are indeed the minimal counterexamples.  In addition we found $133$ graphs on nine vertices that contradict the statement.  The software used to verify these claims, along with the graphs themselves, are available in the repository \cite{gcs}.
\end{proof}

\section{Further discussion and open questions}\label{sec:further}

We end with some open questions. Some of these have been mentioned above but we collect them here for convenience. Recall from Section \ref{sec:notstrong} that we are interested in graphs that are $\II$-contractible but not strong $\II$-contractible.  Our conjectures from that section were as follows.

\newtheorem*{conj:dunce}{Conjecture \ref{conj:dunce}}
\begin{conj:dunce}
The smallest graph (in terms of vertices) with trivial homology that it is not strong $\II$-contractible is shown in Figure \ref{DunceA}.
\end{conj:dunce}

\newtheorem*{conj:dunce2}{Conjecture \ref{conj:dunce2}}
\begin{conj:dunce2}
The graph depicted in Figure \ref{DunceB} is the smallest graph $G$ (in terms of vertices) that has the following properties.
\begin{enumerate}
    \item $G$ is strong $\II$-contractible;
    \item There exists and edge $\{v,w\}$ in $G$ such that $N({v,w})\in \SIC$ and such that $G \backslash \{v,w\}$ is not strong $\II$-contractible.
    \end{enumerate}
\end{conj:dunce2}

Our next collection of open questions address the connection between the classes of graphs $\SVIC$, $\SIC$, and $\II$, and topological properties of their clique complexes.  First recall from Theorem \ref{sicne} and Proposition \ref{kd-kc} that a graph $G$ is strong vertex $\II$-contractible if and only if $\Delta(G)$ is non-evasive.

As for the other classes of graphs, recall also that if $G$ is strong $\II$-contractible then $\Delta(G)$ is collapsible \cite{Espinoza2018}. As far as we know the converse is still open.  The statement was first formulated as a conjecture by the last three authors in \cite{Espinoza2018}, where some computational evidence was also discussed.

\begin{conjecture}
Suppose $G$ is a graph such that its clique complex $\Delta(G)$ is collapsible.  Then $G$ is strong $\II$-contractible.
\end{conjecture}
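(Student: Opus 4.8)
The plan is to argue by strong induction on the number of faces of the clique complex $\Delta(G)$, using the recursive characterization of collapsibility in Definition \ref{def:collapsible}. Note first that a collapsible complex is contractible, hence connected, so the hypothesis forces $G$ to be connected (or $G=K(1)$); in particular every vertex of $G$ has a neighbor, which will keep the relevant face counts strictly decreasing. The base case $G=K(1)$ is in $\SIC$ by definition. For the inductive step, since $\Delta(G)$ is collapsible and not a single point, Definition \ref{def:collapsible} gives a nonempty face $\sigma\in\Delta(G)$ such that both $\mathrm{fdel}_{\Delta(G)}(\sigma)$ and $\mathrm{link}_{\Delta(G)}(\sigma)$ are collapsible. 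I would then split on the cardinality of $\sigma$.

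If $\sigma=\{v\}$ is a vertex, then (using that $\Delta(G)$ is flag) $\mathrm{link}_{\Delta(G)}(v)=\Delta(N_G(v))$ and $\mathrm{fdel}_{\Delta(G)}(v)=\mathrm{del}_{\Delta(G)}(v)=\Delta(G-v)$, each collapsible and each with strictly fewer faces than $\Delta(G)$. By the induction hypothesis $N_G(v)\in\SIC$ and $G-v\in\SIC$. Since $N_G(v)$ is an induced subgraph of $G-v$ lying in $\SIC$, the gluing move (I2) produces $G$ from $G-v$, so $G\in\SIC$. If $\sigma=\{v_1,v_2\}$ is an edge of $G$, then $\mathrm{link}_{\Delta(G)}(\{v_1,v_2\})=\Delta(N_G(v_1,v_2))$ and $\mathrm{fdel}_{\Delta(G)}(\{v_1,v_2\})=\Delta(G\backslash\{v_1,v_2\})$, again both collapsible with strictly fewer faces. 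By induction $N_G(v_1,v_2)\in\SIC$, so $\{v_1,v_2\}$ is an $\SIC$-contractible edge, and by induction $G\backslash\{v_1,v_2\}\in\SIC$. Deleting the edge $\{v_1,v_2\}$ does not change the common neighborhood $N_G(v_1,v_2)$, so the gluing move (I4) reconstructs $G$ from $G\backslash\{v_1,v_2\}\in\SIC$, giving $G\in\SIC$.

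The remaining case, $|\sigma|\geq 3$, is the crux and the reason the statement is only a conjecture: the face deletion $\mathrm{fdel}_{\Delta(G)}(\sigma)$ contains every edge of $\sigma$ but not $\sigma$ itself, so it is not a flag complex and there is no graph $H$ with $\Delta(H)=\mathrm{fdel}_{\Delta(G)}(\sigma)$; the induction cannot be continued as written. To close the gap one would need one of the following, which I am not able to supply. Option (a): prove that a collapsible flag complex always admits a collapsing order in which every collapsed face has dimension at most one, i.e. that the face $\sigma$ above can always be taken to be a vertex or an edge; this would finish the argument immediately. Option (b): realize the passage $\Delta(G)\searrow\mathrm{fdel}_{\Delta(G)}(\sigma)$ by an explicit sequence of (I2) and (I4) moves that only ever touches flag complexes — for instance by first gluing auxiliary edges via (I4) so that the flag closure of $\mathrm{fdel}_{\Delta(G)}(\sigma)$ becomes manageable — and then invoking the induction hypothesis on the (possibly rearranged) intermediate graphs.

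I expect Option (a) to be either false or essentially equivalent in difficulty to the conjecture itself, and Option (b) to be the more promising but still delicate route; results on collapsibility of flag complexes from the literature may be relevant, but I do not currently see how to complete either line of attack, which is why I present this as a conjecture rather than a theorem.
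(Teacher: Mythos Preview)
Your analysis is accurate, but there is nothing to compare it against: the paper does \emph{not} prove this statement. It is stated as an open conjecture in Section~\ref{sec:further}, attributed to \cite{Espinoza2018}, with only computational evidence mentioned in its support. You have correctly recognized this and presented it as a conjecture rather than a theorem.

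Your identification of the obstruction is exactly the one the paper itself flags: as noted in Section~\ref{sec:definitions}, for a flag complex $\Delta$ and a face $\sigma$ with $|\sigma|\geq 3$, the face deletion $\mathrm{fdel}_{\Delta}(\sigma)$ is no longer flag, so the recursive definition of collapsibility does not translate directly into the graph operations (I2) and (I4). Your two proposed routes around this---restricting collapses to faces of dimension at most one, or simulating a higher-dimensional face deletion by a sequence of (I2)/(I4) moves that stays within flag complexes---are reasonable formulations of what would need to be shown, and the paper offers no progress on either. In short, your write-up is a correct explanation of why the statement remains open, and it is consistent with the paper's own discussion.
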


Finally recall from Theorem \ref{thm:contractible} that if $G$ is $\II$-contractible then $\Delta(G)$ is contractible.  Again the converse is still open; that is, if $\Delta(G)$ is a contractible flag complex is it true that $G$ is $\II$-contractible?  It might be surprising if the class of $\II$-contractible graphs happens to coincide with the class of contractible flag simplicial complexes.  Indeed, as the barycentric subdivision of any simplicial complex is flag, classifying contractible flag complexes should be as hard as classifying contractible simplicial complexes.  Hence we ask the following.

\begin{question}
Does there exists a graph $G$ that is not $\II$-contractible but such that $\Delta(G)$ is contractible?
\end{question}

We have seen that the contractible transformations from Definition \ref{IF}, when applied to a graph $G$, preserve the homotopy type of the underlying clique complex $\Delta(G)$.  Hence if one deletes contractible vertices and edges of $G$ to obtain a single vertex, we obtain a certificate that $\Delta(G)$ is contractible.  On the other hand our examples show that there exist graphs $H$ such that $\Delta(H)$ is contractible, but where a greedy collapsing will not result in a vertex.

Our Algorithm \ref{SIVC} checks if a graph $G$ is strong vertex $\II$-contractible.  As a greedy algorithm it is quite straight-forward, and we have seen that it is sufficient to check for $\II$-contractibility of any graph on less than $11$ vertices.  For larger graphs one could perhaps modify the algorithm, and for this it would be useful to have other examples of graphs that are in $\SVIC$ but where the order of vertex deletion matters.  Similarly, Algorithm \ref{SIC} can fail to detect $\II$-contractibility of a graph on 15 or more vertices, since in general one must ``anti-collapse'' to determine if a graph is in $\II$.

The notions of contractible transformations on a graph discussed here also have potential applications to the computation of \emph{persistent homology}.  In this context we are given a filtration of topological spaces, each of which is the Rips complex of some graph (typically the intersection graph of some cover of a point cloud).  One is interested in computing the homology classes that `persist' in this filtration, that survive via the induced map on homology.  As the contractible transformations on a graph $G$ are functorial and preserve the homotopy type of $\Delta(G)$ one can speed up the process of computing persistent homology by first applying the collapsing transformations to each graph in the sequence.

\begin{question}
Can one apply Algorithm \ref{SIC} to speed up the calculation of persistent homology for Rips complexes of graphs?
\end{question}

This question was also addressed in \cite{Espinoza2018}, and animations of an example of this process is available at \cite{gcs}. A similar approach was utilized for the case of strong collapses (which correspond to 0-dismantlable vertices in our context) by Boissonnat and Pritam in \cite{BoiPri}.  Since 0-dismantlable vertices are a special case of $\SVIC$-contractible vertices, our approach will a priori lead to smaller graphs in the sequence.

\section*{Acknowledgments}
We wish to thank Brendan D. McKay, who provided the database of graphs on 10 and 11 vertices \cite{McKay}, as well as Frank Lutz for helpful discussions.  We are especially grateful to an anonymous referee who provided extensive comments and corrections that helped to substantially improve the paper.

\bibliographystyle{siam}
\bibliography{references}

\end{document}